\newtheorem{theorem}{Theorem}[section]
\newtheorem{lemma}[theorem]{Lemma}
\newtheorem{definition}[theorem]{Definition}
\newtheorem{corollary}[theorem]{Corollary}
\newtheorem{remark}[theorem]{Remark}
\newcommand\RR{{\Bbb R}}
\newcommand\NN{{\Bbb N}}
\newcommand\ZZ{{\Bbb Z}}
\newcommand\HH{{\Bbb H}}
\begin{document}
\title{Shannon Multiresolution Analysis on the Heisenberg Group}

\author{
 \thanks{Research supported by the German Academic  Exchange Service DAAD, {\em German Academic Exchange Service}. 2005}
Azita Mayeli  \\
}


\maketitle

\begin{abstract}

We   present a notion   of frame multiresolution analysis on the Heisenberg group, abbreviated by  FMRA,
  and study its properties. Using the irreducible representations of this group, we shall define
  a sinc-type function which is our starting point for obtaining the scaling function. Further, we shall 
     give a concrete example of a wavelet FMRA on the Heisenberg group which is  analogous to the Shannon
 MRA on $\RR$.

\vspace{.5cm}

\footnotesize{
\begin{tabular}{lrl}
{\bf  Keywords.} Heisenberg group,   Schr\"odinger representations,   group Fourier transformation,
  &  \multicolumn{2}{l} {\em  
 }\\
Plancherel theorem, band-limited functions, 
  convolution operator,     wavelet frame. &  \multicolumn{2}{l} {\em  
 }\\
    &  \hspace{-.1cm}  {\em  }\\
 \end{tabular}
 \vspace{.3cm}
 
 \begin{tabular}{lrl}
  &&  \hspace{-1cm}{\bf  AMS  Subject Classification (2000)} {22E25, 22E27, 42C40.
  }
\end{tabular}
}
      \end{abstract}
   
          \section{Introduction}
\label{sec:Introduction}

\textit{Multiresolution analysis} (MRA),  is an important
 mathematical tool since it provides a natural framework for understanding
  and constructing discrete  wavelet systems (wavelet frames). 
  The theory of a frame multiresolution analysis, for instance on $\RR$,
  and some of its properties are studied  in  \cite{BeLi93}. \\

  There are  different approaches to construct wavelet frames  on the Heisenberg  group $\HH$ (e.g. disceretization of a continuous wavelet \cite{gm1}).  In the present work, 
   we shall define and present  a  frame multiresolution analysis FMRA on $\HH$,  which will imply the existence 
 of  a normalized 
tight wavelet frame (n.t. frame) on this group. 
   More precisely, in Theorem \ref{prove-of-wavelet} we shall show that:\\

{\em
There exists a band-limited function $\psi\in L^2(\HH)$ and a lattice
$\Gamma$
in  $\HH$ such that
   the discrete wavelet system $\{L_{2^{-j}\gamma}D_{2^{-j}}\psi\}_{j,\gamma}$ forms a n.t. frame
of
$L^2(\HH)$.}\\

Accordingly, any function in $L^2(\HH)$ can be expanded in this
wavelet frame with associated wavelet coefficients. \\

 A standard way to construct a wavelet frame by  the multiresolution analysis technique is 
by starting with a scaling function, i.e., a function which is refinable.
   In contrast to the  standard approach, our starting point here is not a scaling function.
 Rather,  we first construct a sinc-type function on $\HH$ which is band-limited, self-adjoint, 
and has additional properties as in Theorem \ref{sinc-type-function}. The existence of a sinc-type 
function with the desired properties implies the existence of  a scaling and wavelet function on $\HH$. \\

    A different notion of   mutliresolution analysis on stratified Lie groups was obtained  by 
     Lemari{\'e} \cite{Le89}. There, in fact, 
   the left-translations of the scaling function under 
      a discrete set  constitute an unconditional basis for the central scaling space.   
   As an example, he constructed a MRA, which arises 
     from  a  generalized spline-surface space, and obtains a $C^N$ wavelet orthonormal basis of 
  spline wavelets  on these groups.  
   The wavelet orthonormal basis in this example is generated by finitely  many wavelets.\\

      Continuous  wavelets on nilpotent Lie groups have been studied by many authors (e.g.,  see  \cite{Brad07, Fuehr05}
 and references therein).  The existence of a Parseval frame on $\HH$, 
 which is a system of   dilates and left-translations of a single wavelet, is proved   in \cite{Brad08}, 
      The existence of a continuous  wavelet  in  closed subspaces of  $L^2(\HH)$ was studied   in \cite{LiuPeng97}. 
(For the definition of continuous wavelet on $\HH$, see for instance \cite{mayeli06}). The authors in \cite{LiuPeng97}
 do not study the disceretization of the wavelet to obtain  a wavelet frame or wavelet orthonormal basis.\\

 The   contributions  of this work will be as follows: After the introduction and some notation and preliminaries in Section \ref{Pre-Not}, in Section
  \ref{Groups-Fourier-Transformation-of-the-Heisenberg-Group}
 we shall give a brief review of the group Fourier Analysis on the Heisenberg group.   
  The main results of this work are presented 
in Section
\ref{sec:A FrameMultiresolutionAnalysisForHeisenbergGroup}. Here, we shall introduce  the  
  \text{FMRA}  on $\HH$, i.e., the concept of orthonormal basis
 will be replaced by frames. Then we present  a concrete example of
    FMRA on $\HH$, Shannon MRA, and hence we prove  the existence of a scaling and
wavelet
     function for the Heisenberg group.\\
Finally,  we demonstrate the existence of a Shannon
normalized tight frame on $\HH$, i.e., existence
 of a \textit{band-limited}  function on $\HH$
  such that its translations under an appropriate
  lattice in   $\HH$ and its dilations
   with respect to  the integer powers of a  suitable
      automorphism of $\HH$ yields
    a normalized tight frame for $L^2(\HH)$.\\

  Some words about  MRA: 
  There are three
 things in MRA that mainly concern us:   the density of the
 union,  the triviality of the intersection of the nested sequence of
 closed subspaces, and the existence of refinable functions, i.e.,
  functions which have an expansion in their scaling.
  The  triviality  of the intersection  is derived from the
  other conditions of MRA. To obtain
the density of the union, we have to generalize the
concept of the \textit{support} of the Fourier transform.
The new concepts, such as  \textit{band-limited} in $L^2(\HH)$,
  arise
 in this generalization. As to refinability, it depends very
 much on the individual function $\phi$, the so-called
  {\em scaling function}. An example of a  scaling function is presented in this work.\\
Then, we create the  Sha\-n\-non\--\-MRA, as a  concrete example of a FMRA on the Heisenberg
group, for which we prove the
existence of a wavelet function. This wavelet function
is related to a certain lattice of $\HH$. 

  Although the central closed shift-invariant space 
  in our multiresolution analysis is a Paley-Wiener space on the Heisenberg group, we do not study 
  any sampling theorems here.  
For  sampling theory   in the general setting, see  for instance \cite{Feich-Pesen04},
 \cite{FuGr}, and \cite{Pesenson98}  and  the references therein.

Observe that we do not obtain any
smoothness conditions on our wavelets here. A class of Schwartz wavelets in the  general setting, 
i.e.,  stratified Lie groups, has already been constructed and studied  in our earlier work  \cite{gm1}.
Those wavelets did not arise from an MRA, and are only ``nearly tight"  (if sufficiently fine lattices are
used).

  
     \section{Preliminaries and Notations}\label{Pre-Not} 
      We use the abbreviation  ONB  for orthonormal basis and the word  {projection} for
self-adjoint projection operator on a Hilbert space.  We denote the space of Hilbert-Schmidt
operators on $L^2((\RR)$ by  $HS(L^2(\RR))$
  (For the facts we shall use about Hilbert-Schmidt operators and trace-class operators, see \cite{Folland95},
Appendix 2, and \cite{Schatten60}, $\S 2$ and $\S 3$.)\\

In the following, we outline some notation and results concerning direct integrals. For further
information on direct integrals, we refer the reader to \cite{Folland95} $\S7.4$.\\
A family $\{\mathcal{H}_\alpha\}_{\alpha\in A}$ of nonzero separable Hilbert spaces indexed by $A$
will be called a field of Hilbert spaces over $A$.  We assume $A$ is a topological space with a Borel$\sigma$-algebra.
A map $f$ on $A$ such that $f(\alpha)\in \mathcal{H}_\alpha $ for each $\alpha\in A$ will be
called a
  \textit{vector field} on $A$. We denote the inner product and norm on $\mathcal{H}_\alpha $ by
$\langle\;,\;\rangle_\alpha$ and $\parallel.\parallel_\alpha$. A measurable field of Hilbert space 
over $A$ is
a
field of Hilbert spaces
$\{\mathcal{H}_\alpha\}_{\alpha\in A}$ together with a countable family
 $\{e_j\}_1^\infty$ of vector fields with the following properties:
 \begin{itemize}
\item[(a)] the functions $\alpha\mapsto \langle e_j(\alpha), e_k(\alpha)\rangle_\alpha$ are measurable 
for all
$j,k$,
\item[(b)] the linear span of $\{e_j(\alpha)\}_1^\infty $ is dense in $\mathcal{H}_\alpha$, for each
$\alpha$.
\end{itemize}
Given a measurable field of Hilbert spaces $\{\mathcal{H}_\alpha\}_{\alpha\in A}$,  $\{e_j\}$ on
$A$, a vector field $f$ on $A$ will be called {\bf measurable}
 if the function $\alpha\rightarrow \langle f(\alpha), e_j(\alpha)\rangle_\alpha$
is measurable function on $A$, for each $j$.
Finally, we are ready to define direct integrals. Suppose $\{\mathcal{H}_\alpha\}_{\alpha\in A}$ ,
$\{e_j \}_1^\infty $  is a measurable field of Hilbert spaces over $A$, and suppose $\mu$
is a measure on $A$.
  The direct integral of the spaces $\{\mathcal{H}_\alpha\}_{\alpha\in A}$ with respect to $\mu$
is
denoted by
$
\int_A^{\bigoplus}\mathcal{H}_\alpha ~d\mu(\alpha).
$
This is the space of measurable vector fields $f$ on $A$ such that
\begin{align}\notag
\parallel f\parallel^2= \int_A\parallel f(\alpha)\parallel_\alpha^2 d\mu(\alpha)<\infty,
\end{align}
where two vector fields agreeing almost everywhere are identified.
 Then it easily follows that $\int^{\bigoplus}\mathcal{H}_\alpha d\mu(\alpha)$ is a Hilbert space
with the inner product
\begin{align}\notag
\langle f,g\rangle =\int_A \langle f(\alpha),g(\alpha)\rangle_\alpha d\mu(\alpha).
\end{align}
In case of a constant field, that is, $\mathcal{H}_\alpha=\mathcal{H} $ for all $\alpha\in A$,
$\int^{\bigoplus}\mathcal{H}_\alpha d\mu(\alpha)= L^2(A,\mu,\mathcal{H} )$, all the measurable
functions $f: A\rightarrow
\mathcal{H} $ defined on the measure space $(A,\mu)$ with values in  $\mathcal{H}$ such that
\begin{align}\notag
\parallel f\parallel^2= \int_A \parallel f(\alpha)\parallel^2 d\mu(\alpha)<\infty.
\end{align}
 Here
  $\mathcal{H}$  is  considered as a Borel space with
 the
 Borel-$\sigma$-algebra of the norm topology. We will be taking $\mathcal{H}= L^2(\RR)$. \\

 \subsection{Heisenberg Group}
 The Heisenberg group $\HH$ is  a Lie group with underlying manifold $\RR^3$. We denote points in
$\HH$ by $(p,q,t)$ with $p,q,t\in \RR$, and define the group operation by
\begin{align}\label{group-action}
(p_1,q_1,t_1)\ast (p_2,q_2,t_2)= \big(p_1+p_2,q_1+q_2,t_1+t_2+\frac{1}{2}(p_1q_2-q_1p_2)\big).
\end{align}
It is straightforward to verify that this is a group operation, with the origin $0=(0,0,0)$ as the
identity element. Note that the inverse of $(p,q,t)$ is given by $(-p,-q,-t)$.
We can identify both $\HH$ and its Lie algebra $\mathfrak{h}$ with $\RR^3$, with group operation
given by  (\ref{group-action}) and Lie bracket given by
\begin{align}\label{bracket}
\left[(p_1,q_1,t_1),(p_2,q_2,t_2)  \right]:= (0,0,p_1q_2-q_1p_2).
\end{align}
The Haar measure on the Heisenberg group $\HH= \RR^3$ is the usual Lebesgue measure.
More precisely, the Lie algebra $\mathfrak{h}$ of the Heisenberg group $\HH$ has a basis
$\{X,Y,T\}$, which we may think of as left invariant differential operators on $\HH$; where   $[X,Y]=T$  and  all other  brackets are zero,  and where the exponential function $\exp:\;\mathfrak{h}\rightarrow \HH$ is the  identity, i.e.,
\begin{align}\notag
\exp\;(pX+qY+tT)= (p,q,t).
\end{align}
We define the  action of  $\mathfrak{h}$ on the space $C^\infty(\HH)$ via
  left invariant differential operators.\\
  For two functios $f$ and $g$, each in $L^1(\HH)$ or $L^2(\HH)$, the {convolution}
of $f$ and $g$ is the function $f\ast g$ defined by
\begin{align}\label{convolution}
f\ast g(\omega)=\int_\HH f(\nu)g(\nu^{-1}\omega)d\nu.
\end{align} 
We note that, for any pair $f,g\in L^2(\HH)$, one has $f\ast \tilde{g}\in C_b(\HH)$, where
$\tilde{g}(\omega)=
\overline{g(\omega^{-1})}$.
For more details  about convolution of functions see for example \cite{Folland95} Proposition
(2.39).
 \begin{definition}
$f\in L^2(\HH)$ is called \text{selfadjoint convolution idempotent} if $f=\tilde f= f\ast f.$
\end{definition}
The selfadjoint convolution idempotents  and their support properties are studied in detail  in 
  \cite{Fuehr05}\S2.5.\\

  For properties of selfadjoint convolution idempotents  in $L^2$ we refer the reader to
\cite{Fuehr05},\;\S 2.5.\\

Our definition of a  continuous, or a discrete, wavelet  on $\HH$, involves the 
  one-parameter dilation group of $\HH$, i.e., $H=(0,\infty)$, where any $a>0$ defines an
automorphism of $\HH$,
  by
\begin{align}\label{oneparameterauto}
a(p,q,t)= (ap,aq,a^2t)\hspace{.5in}\forall ~(p,q,t)\in\HH.
\end{align}
(In the construction of a discrete wavelet, one takes  a discrete version of the one-parameter group.
Usually a dyadic discretization is   considered.)
Adapting the notation of dilation and
 translation  operators on $L^2(\RR)$, for each $a>0$, 
we define $D_a$ to be the unitary operator on $L^2(\HH)$
given by
\begin{align}\notag
D_af(p,q,t)= a^{2}f(a(p,q,t))= a^{2}f(ap,aq,a^2t)\quad \forall f\in L^2(\HH),
\end{align}
 and for any $\upsilon\in \HH$, the left translation operator, $L_\omega$ is given by
\begin{align}\notag
L_\omega f(\upsilon)=  f( \omega^{-1}\upsilon)\quad\forall \upsilon\in \HH.
\end{align}
Using the dilation and translation operators, we can now define the \textit{quasiregular representation } $\pi$ of
the
    semidirect product   $G:=\HH\rtimes (0,\infty)$; it acts on $L^2(\HH)$  by
 \begin{align}\notag 
 (\pi(\omega,a)f)(\upsilon):=L_\omega D_af(\upsilon)= a^{-2}  f(a^{-1}(\omega^{-1}\upsilon)),
 \end{align}
for any $f\in L^2(\HH)$ and $(\omega,a)\in G$  and for all $\upsilon\in\HH$.\\
 
 \subsection{Frames}
We conclude this section with the definition of  \textit{frames} and
      some related  notions, which  will be used
    in the context of  FMRA  in this work. 
 The concept of  frames is a generalization of orthonormal bases, defined as follow:
 \begin{definition}\label{definition-of-frame}
A countable subset $\{e_n\}_{n\in I}$ of a Hilbert space $\mathcal{H}$ is said to be a
\textbf{frame} of $\mathcal{H}$ if there exist two  numbers $0<a\leq b$ so that, for any
   $f\in\mathcal{H}$,
   \begin{align}\notag
   a\parallel f\parallel^2 \leq \sum_{n\in I} \mid \langle f,e_n\rangle \mid^2 \leq b\parallel 
f\parallel^2.
   \end{align}
 The positive numbers $\textbf{a}$ and $\textbf{b}$ are called
  \textbf{frame bounds}.
 Note that the frame bounds are not unique. The \textit{optimal lower frame bound} is the supremum
over all lower  frame bounds, and the  optimal  upper  frame  bound    is the infimum over all
upper frame bounds. The optimal frame bounds are actually  frame bounds.
  The frame is called a \textbf{tight frame}  when one can take $a=b$ and a
  \textbf{normalized tight frame} when one can take $a=b=1$.
 \end{definition}
Frames were introduced for the first time in
\cite{DuffinSchaeffer52}. See also \cite{Christensen03} and \cite{Groechening01} for more about frame theory.


 \section{ Fourier Analysis on the Heisenberg Group}\label{Groups-Fourier-Transformation-of-the-Heisenberg-Group}

 This section contains a brief review of Fourier analysis on the Heisenberg group  $\HH$.
      In order to study the Fourier analysis  on this  group, one  has  to study  the irreducible representations  of this group.
    The Heisenberg group  is the best known example of a non-commutative nilpotent Lie group. The
representation theory of $\HH$ is simple and well understood. Using the fundamental theorem, due
to
\textit{Stone and von Neumann}, we can give a complete classification of all the irreducible
unitary
representation of $\HH$.\\
It is known that for  the Heisenberg group  there are two families of irreducible unitary representations, at least
 up to unitary equivalence. One family, giving all infinite-dimensional irreducible unitary
 representations, is parametrized by nonzero real numbers $\lambda$; the other family, giving all
 one-dimensional representations, is parametrized by $(b,\beta)\in \RR\times \RR$.
  We will see below that the one-dimensional representations give no contribution to the Plancherel
formula and Fourier inversion transform, i.e.  they form a set of representations that has zero
Plancherel measure. 
 Hence we will focus on the {Schr\"odinger} representation, defined
next.   For more about the representations of the Heisenberg group and the Plancherel theorem, we refer the 
interested reader to  \cite{Geller77}.\\
   
       The infinite-dimensional irreducible unitary representations of the Heisenberg group may
be realized on $L^2(\RR)$; there they are the called the
  \textit{Schr\"odinger} representations.  These are defined as follows.
  For each $\lambda\in \RR^\ast(=\RR \backslash \left\{ 0\right\})$ and  for any $(p,q,t)\in \HH$, the
operator $\rho_\lambda(p,q,t)$ acts on $L^2(\RR)$ by
  \begin{align}\label{schroedinger}
 \rho_\lambda(p,q,t)\phi(x)= e^{i\lambda t}e^{i\lambda(px+\frac{1}{2}(pq))}\phi(x+q)
\end{align}
where $\phi\in L^2(\RR)$. It is easy to see that $\rho_\lambda(p,q,t)$  is a unitary operator
satisfying the homomorphism property:
\begin{align}\notag
\rho_\lambda\big((p_1,q_1,t_1)(p_2,q_2,t_2)\big)= \rho_\lambda(p_1,q_1,t_1)
\rho_\lambda(p_2,q_2,t_2).
\end{align}
Thus each $\rho_\lambda$ is a strongly continuous unitary representation of $\HH$.
A theorem of Stone and von Neumann  (\cite{Folland95}) says that up to unitary equivalence these are all the
infinite-dimensional irreducible unitary representations of $\HH$.\\
 Recall that the
dilation  operator
given by  $a>0$
is defined on $\HH$ as follow:
\begin{align}\notag
a: \;(p, q, t)\rightarrow a(p,q,t)=(ap,aq, a^2t)\quad \forall \;(p,q,t)\in \HH.
\end{align}
One then easily calculates that
 \begin{align}\label{equivalent-representations}
  \rho_\lambda(a^{-1}(p,q,t))=
  D_{a^{-1}} \rho_{a^{-2}\lambda}(p,q,t)D_a\quad \forall (p,q,t)\in \HH
  \end{align}
where  $D_{a^{-1}}=D_a^\ast$.\\

 \subsection{Fourier Transform on the Heisenberg Group}
 Here  we present a brief introduction to  the group Fourier transform for functions on $\HH$, (see \cite{Geller77}), and introduce the
inversion  and Plancherel theorems for the Fourier transform. \\

If $f\in L^1(\HH)$, we define the Fourier transform of $f$ to be the measurable field of operators
over $\widehat \HH$ given by the weak operator integrals, as follows:
 \begin{align}\label{fourier-transform}
\widehat f( \lambda)=\int_\HH f(\omega)\rho_\lambda(\omega)~d\omega.
\end{align}
For simplicity, we write here $\widehat f(\lambda)$ instead of $\widehat f(\rho_\lambda)$. Note that the
Fourier transform $\widehat f( \lambda)$ is an
  operator-valued function, which for any $\phi, \psi\in L^2(\RR)$ satisfies
 \begin{align}\notag
 \langle\widehat f(\lambda)\phi,\psi\rangle= \int_\HH f(p,q,t) \langle 
\rho_\lambda(p,q,t)\phi,\psi\rangle
 ~dpdqdt,
\end{align}
by definition of the weak operator integral.
The operator $\widehat f(\lambda)$ is bounded on $L^2(\RR)$ with the operator norm satisfying
$\parallel \widehat f(\lambda)\parallel \leq \parallel f\parallel_1.$ 
If $f\in L^1\cap L^2(\HH)$, $\widehat f(\lambda)$ is actually a Hilbert-Schmidt operator and from  the Plancherel theorem, the
Fourier transform can be extended to a unitary map from  $  L^2(\HH)$ onto   $ L^2\big(\RR^\ast,   d\mu(\lambda), L^2(\RR)\otimes L^2(\RR) \big),$ 
  the space of functions on $\RR^\ast$ taking values in
$L^2(\RR)\otimes L^2(\RR)$ which are square
integrable with respect to  the Plancherel measure $ d\mu(\lambda)= (2\pi)^{-2}\left|\lambda\right| d\lambda$.   (We say  a function 
$g$ defined on $\RR^\ast$  is square integrable, when it is a measurable vector field on $\RR^\ast$  and 
$\int_{\RR^\ast} \parallel g(\lambda)\parallel_{H.S}^2 d\mu(\lambda)< \infty.$) 
The proof of the Plancherel theorem  for the Heisenberg group may be found in \cite{Geller77}.
For more general groups, see for example \cite{Folland95}.
  
 A simple computation shows that the basic properties of the Fourier transform remain valid for
$f,g\in
\left(L^1\cap L^2\right)(\HH)$: More precisely, 
\begin{itemize}
\item[(i)] $\widehat{(af+bg)}(\lambda)=a\widehat f(\lambda)+b\widehat g(\lambda),$
\item [(ii)]$\widehat{(f\ast g)}(\lambda)=\widehat f(\lambda)\widehat g(\lambda),$
\item [(iii)]$\widehat{(L_\omega f)}(\lambda)=\rho_\lambda(\omega)\widehat f(\lambda),$ for $\omega\in\HH$
 \item [(iv)]$\widehat{( \tilde f)}(\lambda)=\widehat f(\lambda)^\ast.$  
 ( The superscript   $\ast$ denotes adjoint.)
\end{itemize}
We conclude this section with a computation of the Fourier transform of $f(a\cdot)$ for any $f\in L^2(\HH)$. 
 \begin{lemma} For any $f\in L^2(\HH)$ is
 \begin{align}\notag
 \widehat{f(a\cdot)}(\lambda)
 =a^{-4} D_{a^{-1}}\widehat {f(a^{-2}\lambda)}D_a.
 \end{align}
 \end{lemma}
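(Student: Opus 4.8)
The plan is to compute $\widehat{f(a\cdot)}(\lambda)$ straight from the definition (\ref{fourier-transform}) of the group Fourier transform for $f$ in the dense subspace $L^1\cap L^2(\HH)$, and then to pass to an arbitrary $f\in L^2(\HH)$ by continuity. The reduction step will go as follows. Since $D_af(p,q,t)=a^2f(a(p,q,t))$, the map $f\mapsto f(a\cdot)$ equals $a^{-2}D_a$ and is therefore bounded on $L^2(\HH)$; composing with the unitary Fourier transform, the left--hand side $f\mapsto\widehat{f(a\cdot)}$ is a bounded map from $L^2(\HH)$ into the Plancherel space. On the right--hand side, conjugation of a Hilbert--Schmidt operator by the unitaries $D_a,D_{a^{-1}}$ on $L^2(\RR)$ preserves the Hilbert--Schmidt norm, and the substitution $\lambda\mapsto a^{-2}\lambda$ is bounded on $L^2(\RR^\ast,d\mu)$ because $d\mu(\lambda)=(2\pi)^{-2}|\lambda|\,d\lambda$ transforms by a constant factor; hence $f\mapsto\big(\lambda\mapsto a^{-4}D_{a^{-1}}\widehat f(a^{-2}\lambda)D_a\big)$ is bounded as well. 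So it will suffice to prove the identity for $f\in L^1\cap L^2(\HH)$, in which case $f(a\cdot)\in L^1(\HH)$ too and both sides are honest weak operator integrals.

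Next I would fix $f\in L^1\cap L^2(\HH)$ and $\lambda\in\RR^\ast$ and write, by (\ref{fourier-transform}), $\widehat{f(a\cdot)}(\lambda)=\int_\HH f(a\omega)\rho_\lambda(\omega)\,d\omega$. Substituting $\nu=a\omega$, and using that the Haar measure is Lebesgue measure on $\RR^3$ while the automorphism $(p,q,t)\mapsto(ap,aq,a^2t)$ has Jacobian $a\cdot a\cdot a^2=a^4$, gives $d\omega=a^{-4}\,d\nu$ and hence $\widehat{f(a\cdot)}(\lambda)=a^{-4}\int_\HH f(\nu)\rho_\lambda(a^{-1}\nu)\,d\nu$. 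Applying the intertwining relation (\ref{equivalent-representations}) in the form $\rho_\lambda(a^{-1}\nu)=D_{a^{-1}}\rho_{a^{-2}\lambda}(\nu)D_a$, this becomes $\widehat{f(a\cdot)}(\lambda)=a^{-4}\int_\HH f(\nu)\,D_{a^{-1}}\rho_{a^{-2}\lambda}(\nu)D_a\,d\nu$.

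It then remains to pull the fixed bounded operators $D_{a^{-1}}$ and $D_a$ out of the weak operator integral, which is the one place a little care is warranted. For $\phi,\psi\in L^2(\RR)$, using $D_{a^{-1}}=D_a^\ast$ and the definition of the weak operator integral applied to $\widehat f(a^{-2}\lambda)=\int_\HH f(\nu)\rho_{a^{-2}\lambda}(\nu)\,d\nu$, one has
\begin{align}\notag
\langle D_{a^{-1}}\widehat f(a^{-2}\lambda)D_a\phi,\psi\rangle
&=\langle \widehat f(a^{-2}\lambda)(D_a\phi),(D_a\psi)\rangle
=\int_\HH f(\nu)\langle\rho_{a^{-2}\lambda}(\nu)D_a\phi,D_a\psi\rangle\,d\nu\\\notag
&=\int_\HH f(\nu)\langle D_{a^{-1}}\rho_{a^{-2}\lambda}(\nu)D_a\phi,\psi\rangle\,d\nu,
\end{align}
so that $D_{a^{-1}}\widehat f(a^{-2}\lambda)D_a=\int_\HH f(\nu)D_{a^{-1}}\rho_{a^{-2}\lambda}(\nu)D_a\,d\nu$ as a weak operator integral. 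Combining this with the previous paragraph yields $\widehat{f(a\cdot)}(\lambda)=a^{-4}D_{a^{-1}}\widehat f(a^{-2}\lambda)D_a$ for $f\in L^1\cap L^2(\HH)$, and the density argument of the first paragraph extends it to all of $L^2(\HH)$. I do not expect a serious obstacle: the only genuine subtlety is keeping the two uses of the symbol $D_a$ apart — the $\HH$-dilation on $L^2(\HH)$ versus the $\RR$-dilation on $L^2(\RR)$ occurring in (\ref{equivalent-representations}) — and correctly bookkeeping the powers of $a$ produced by the Jacobian.
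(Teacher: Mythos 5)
Your proposal is correct and follows essentially the same route as the paper: apply the definition of the group Fourier transform, change variables under the dilation (picking up the Jacobian factor $a^{-4}$), and insert the intertwining relation (\ref{equivalent-representations}) to pull out $D_{a^{-1}}$ and $D_a$. The extra care you take — the density argument from $L^1\cap L^2(\HH)$ and the explicit justification for moving the unitaries outside the weak operator integral — is detail the paper leaves implicit, not a different method.
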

 \begin{proof}
  From the definition of the Fourier transform (\ref{fourier-transform}), for $\lambda\not=0$  we have
 \begin{align}\notag
 \widehat{f(a\cdot)}(\lambda)&= \int_\lambda f(a(p,q,t) )
\rho_\lambda(p,q,t)~dpdqdt\\\notag
 &= \int_\lambda f(ap,aq,a^2t) \rho_\lambda(p,q,t)~dpdqdt\\\notag
 &=a^{-4}\int_\lambda f(p,q,t) \rho_\lambda (a^{-1}p,a^{-1}q,a^{-2}t)~ dpdqdt\\ \notag
 &=a^{-4}\int_\lambda f(p,q,t) \left(\rho_\lambda(a^{-1}(p,q,t)\right) ~dpdqdt,
 \end{align}
Now inserting   (\ref{equivalent-representations}),   we derive the following relation:
 \begin{align}\label{dilation-of-function}
 \widehat{f(a\cdot)}(\lambda)&= a^{-4} D_{a^{-1}}\big(\int_\lambda f(p,q,t)
\rho_{a^{-2}\lambda}(p,q,t)~dpdqdt\big)D_a \\\notag
 &=a^{-4} D_{a^{-1}}\widehat{f(a^{-2}\lambda)}D_a,
 \end{align}
 as desired.
 \end{proof}

\subsection{Wavelet Frames}
\label{sub-sec:WaveletAnalysis}

  For our purpose, in this work 
   we will consider
   the  wavelet frames
   which are  produced  from one function, as the generator of the wavelet frame,   using a countable family of  dilation and left
translation operators. The generator function is  usually called a
   \textit{``discrete wavelet"}.\\
   
   Below we will  give a  concrete  example of wavelet frames
 with respect to a very special lattice as the discrete  translation set.
    Suppose $\Gamma$ is  a lattice in $\HH$ and $a>0$ refers to the automorphism $ a
:\omega\rightarrow a.\omega$ of
$\HH$. Suppose also $ \mathcal{H}$ is a subspace of $L^2(\HH)$ and $\psi$ is any function in 
$\mathcal{H}$. Then the
discrete   system $\{L_{a^{-j}\gamma}D_{a^{-j}} \psi\}_{j\in\ZZ, \gamma\in\Gamma}$ 
(assumed to be contained in $ \mathcal{H}$)
is called the
         \textit{discrete wavelet system}  generated by $\psi$, where $\{D_{a^{-j}}\}_{j\in\ZZ}$  is the class of  discrete 
unitary dilation operators with respect to the positive number $a$  obtained by $a^j:\omega\rightarrow a^j.\omega$, and $\{L_\gamma\}_{\gamma\in \Gamma}$
 is the class of  left translation operators   with
regard to  the lattice $\Gamma$. The discrete wavelet system  $\{ L_{a^{-j}\gamma}D_{a^{-j}} \psi\}_{j\in\ZZ,
\gamma\in\Gamma}$     is called a \textit{(tight, normalized tight) wavelet frame} of $\mathcal{H}$ if it forms a
(tight, normalized tight) frame for $\mathcal{H}$.
 More precisely, it is a frame if there exist positive numbers $0<A<B<\infty$ such that for any $f \in \mathcal{H}$ we have 
 \begin{align}\notag
 A\| f\|_2^2 \leq \sum_{j\in\ZZ, \gamma\in\Gamma} \mid \langle f, \psi_{j, \gamma}\rangle \mid^2 \leq B \|f\|_2^2,
 \end{align}
 where $\psi_{j, \gamma}:= L_{a^{-j}\gamma}D_{a^{-j}} \psi$. \\

  Now, we are ready to present our main results concerning the multiresolution analysis.

 \section{Frame Multiresolution Analysis   for $L^2(\HH)$}
\label{sec:A FrameMultiresolutionAnalysisForHeisenbergGroup}

 Analogous to the situation on $\RR$, discrete wavelets in $L^2(\HH)$ are functions $\psi$ with the property that  their
appropriate  translates and  dilates defined with respect to the Lie structure of the Heisenberg
group
 can be used to  approximate  any $L^2$-function on $\HH$. But
here the special concept of multiresolution analysis needs to be appropriately adapted.\\

In this work we shall  adapt the definition of MRA for $L^2(\RR)$ to
one for $L^2(\HH)$, replacing  the concept of orthonormal basis by  frames and calling it {\em FMRA}.
Since the triviality  of the intersection    is a direct consequence of the other conditions of
the
definition of an MRA, we prove this property immediately after we give the definition of an
MRA. \\
We begin by properly interpreting  the concept of  MRA of $L^2(\RR)$.   The shift-invariance of $V_0$,
the central subspace in the definition of a MRA, 
 can be interpreted
as an
invariance property with respect to the action of the discrete lattice subgroup $\ZZ$ of $\RR$.
The scaling operator $\alpha$ can be viewed as the action of some group automorphism of $\RR$, with the
property $\alpha\ZZ\subset \ZZ$. \\
With this in mind, it is not difficult to conjecture the correct generalization of MRA to the
Heisenberg group :
\begin{itemize}
\item First,
 a discrete subgroup $\Gamma$ of $\HH$ will play the same role in $\HH$ as $\ZZ$ in $\RR$.
 To say $\Gamma$ is discrete means that
the topology on $\Gamma$ induced from $\HH$ is the discrete topology.
 \item Since our setting is   a non-abelian group, there are two kinds of translations:
left translation $L:=L_\HH$ and right translation $R:=R_\HH$.  We choose left translation here. 
   \end{itemize}
   
   As our starting point, we need the following definition:
 \begin{definition}
 Suppose $\Omega$  is a subset of $\HH$ and $\mathcal{H}$ is a subspace of $L^2(\HH)$. We say
$\mathcal{H}$ is left
  shift-invariant under $\Omega$, if for any $\omega\in \Omega$ we have $L_\omega
\mathcal{H}\subseteq \mathcal{H}$.
    \end{definition}
 After this preparation, we can give a definition of  {FMRA}  for $L^2(\HH)$ related
to an automorphism  of $\HH$ given by $a>0$ (see (\ref{oneparameterauto})) and a lattice $\Gamma$
in $\HH$.
 \begin{definition}\label{def:MRA-H} 
    We say that a sequence of closed subspaces $\{V_j\}_{j\in \ZZ}$ of $L^2(\HH)$ forms a
\textit{FMRA }  of $L^2(\HH)$, associated to an automorphism $a\in Aut(\HH)$
and a lattice $\Gamma$ in $\HH$,  if the following conditions are satisfied:
\begin{enumerate}
    \item  $  V_j\subseteq V_{j+1}\quad \forall j \in \ZZ$,
        \item $\overline{\bigcup V_j}= L^2(\HH)$,
    \item $\bigcap V_j= \{0\}$,
    \item $f\in V_j \Leftrightarrow f(a\cdot) \in V_{j+1} $,
        \item $V_0$ is left shift-invariant under $\Gamma$, and consequently   $V_j$ is left
         shift-invariant  under $ a^{-j}\Gamma$, and,
        \item there exist a function $\phi\in V_0$, called the
         \textit{scaling function}, or generator of the  FMRA, such that the set 
$L_\Gamma(\phi)$
constitutes a normalized tight frame for $V_0$.
\end{enumerate}
 \end{definition}

  \begin{remark}
\item [(a)]  Observe that property 4 in Definition  \ref{def:MRA-H}
implies that
\begin{align}\label{equ}
f\in V_j  \Leftrightarrow f(a^{-j}\cdot)\in V_0.
\end{align}
It follows that an  MRA  is essentially completely determined
by the closed subspace $V_0$. But from property 6, $V_0$ is the closure of
the linear span of the $\Gamma$-translations of the scaling function
$\phi$. Thus the starting point of the construction of  MRA is
the existence of the scaling function
$\phi$. Therefore, it is especially important  to give
some conditions under which an initial function $\phi$ generates an MRA.
\item [(b)]  Equation (\ref{equ}) implies
that if $f\in V_j$, then $f(\gamma^{-1}(a^{j}\omega))\in V_0$ for all $\gamma\in \Gamma$.
Finally  property  6  in Definition \ref{def:MRA-H} and  equation
 (\ref{equ}) imply that the system  $\{L_{a^{-j}\gamma}D_{a^{-j}}\phi\}_{\gamma\in \Gamma}$
is a  normalized tight frame $V_j$  for all $j\in \ZZ$, where $\forall \gamma\in \Gamma,\forall \omega\in \HH$,
$L_{a^{-j}\gamma}D_{a^{-j}}\phi(\omega) =a^{j/2}\phi(\gamma^{-1}(a^{j}\omega))$ 
 \item [(c)]
  Here, for the scaling function we do not impose any regularity or decay condition on $\phi$. In our
case to make the argument simple and general, we require only that $\phi\in L^2(\HH)$.\\
     \item [(d)]  In analogy with $L^2(\RR)$, we say $V_0$ is
     refinable if $D_{a^{-1}}(V_0)\subseteq V_0$. Thus condition $1$ in Definition
\ref{def:MRA-H} is equivalent to saying that $V_0$ is refinable. Thus, the basic question
    concerning a FMRA is whether the scaling function exists. We  shall see in Theorem \ref{existence_of_scaling_function} that such
scaling functions do exist. We will enter into details   later  for  a very  special case. \\
 \item [(e)]  To have a sequence of nested closed subspaces, we must find a refinable function like
$\phi$ in $V_0$.
    It is already known by Boor, DeVore and Ron in \cite{BoorDevoreRon93} for the real case that 
the
refinability of $\phi$ is not enough to generate  an $MRA$. Hence we need other requirements. We
will consider this in detail later.
 \item [(f)] The basic property of multiresolution analysis is that whenever
   a collection of closed subspaces satisfies
properties  1-6 in Definition \ref{def:MRA-H}, then there exists a
 basis $\{ L_{a^{-j}\gamma}D_{a^{-j}}\psi ;\;j\in \ZZ, \gamma \in \Gamma\}$ of $L^2(\HH)$, such that for all $f\in
L^2(\HH)$
 \begin{align}\notag
 P_{j+1}f= P_{j}f+\sum_{\gamma \in \Gamma} \langle f,L_{a^{-j}\gamma}D_{a^{-j}} \psi\rangle ~L_{2^{-j}\gamma}D_{2^{-j}} \psi ,
 \end{align}
 where $P_j$ is the orthogonal projection  of $L^2(\HH)$ onto $V_j$.
  \end{remark}
 
The  approach via the  solution of the scaling equation, with methods of Lawton \cite{Lawton02},
leads to   difficult analytical problems. Therefore  we follow a new  approach, which is based on
the point of view ofHermi
 \textit{Shannon multiresolution analysis}. This  will  allow us to derive the   existence of a
Shannon  wavelet   in $L^2(\HH)$.\\
In the setting of the real line, 
  the canonical construction of wavelet bases starts with a
multiresolution analysis $\{V_j\}_{j}$. In $L^2(\RR)$ one proves the existence of a wavelet
$\psi\in W_0$, such that $\{L_k
\psi, \; k\in\ZZ\}$ is an orthonormal basis for
$W_0$. ($L_k$ is the translation operator.)\\
  Consequently the set $\{L_{2^{-j}k }D_{2^j}\psi\}_{k\in \ZZ}$, the set of dyadic dilations and translations of $\psi$, 
 constitutes  an orthonormal basis for $W_j$. (Dyadic refers here to the dilation $D_2$.)
By the orthogonal decomposition $L^2(\RR)=\underset{j\in \ZZ}{\bigoplus}  W_j$, the wavelet system
$\{ L_{2^{-j}k}D_{2^{j}}\psi\}_{j, k\in \ZZ}$ is an orthonormal basis for $L^2(\RR)$. \\

In our setting, we shall construct  on $\HH$ a Shannon-MRA as an example of a FMRA.
In contrast of the case of $\RR$, the construction of the scaling function is not our starting point
for
obtaining a FMRA, but rather, first we intend to construct a special function which implies the
existence of the scaling function in some closed subspace of $L^2(\HH)$.
 Furthermore, for the construction, we shall consider the automorphism $a=2$ of $\HH$ which  is
given by:
\begin{align}\notag
 a(p,q,t)= (2p,2q,2^2t)\hspace{.3in} \forall~ (p,q,t)\in \HH.
  \end{align}
  
  \subsection{An Example: Shannon MRA for $L^2(\HH)$}

 As remarked before, we shall  construct a generator  function in some closed and shift-invariant subspace
of $L^2(\HH)$, such that  its translations and dilations yields a
 normalized tight frame of  $L^2(\HH)$. 
    For this reason, first  we choose the     dilation  operator $D_a= D_2$
    and try to associate a space $V_0$ which has similar properties as the Paley-Wiener space on
$\RR$. With this aim in mind, 
      we start with the
definition of a band-limited function on $\HH$:
      \begin{definition}\label{strongly-supported}  Suppose $\mathcal{I}$ is  some bounded subset
of
 $\RR^\ast$ and
   $S$ is a function in $L^2(\HH)$. We say $S$ is $\mathcal{I}$-band-limited  if $\widehat
S(\lambda)=0$ for all $\lambda\not\in \mathcal{I}$.
   \end{definition}
   
  In the next theorem   we shall  construct 
    sinc-type function on the Heisenberg group which is our  starting  point for obtaining  the scaling function:
    
   \begin{theorem}\label{sinc-type-function}
   Let $d$ be any positive integer.
   There exists a selfadjoint convolution idempotent function $S$ in $L^2(\HH)$ which is
$\mathcal{I}$-band-limited for  $\mathcal{I}= \left[ -\frac{\pi}{2d},\frac{\pi}{2d} \right]\backslash\{0\}$.
  Define $S_j= 2^{4j}S(2^j\cdot)$ for $j\in\ZZ$. Then   $S_j$ is $\mathcal{I}_j$-band-limited for
    $\mathcal{I}_j=  \left[ -\frac{2^{2j}\pi}{2d},\frac{2^{2j}\pi}{2d} \right]\backslash\{0\}$ 
 and the following consequences hold:
   \begin{itemize}
   \item [$(a)$] $S\ast S_j=S\;\;\forall j> 0\;and\; S_j\ast S= S_j\;\;  \forall j<0$,
  \item [$(b)$] $f\ast S_j\rightarrow 0\;\text{ in} \;L^2\text{-norm \; as}\; j\rightarrow -\infty
\;\forall f\in L^2(\HH)$,
   \item[$(c)$] $f\ast  S_j\rightarrow f \;\text{ in} \;L^2\text{-norm  \; as}\; j\rightarrow \infty
\;\forall f\in L^2(\HH)$, and,
\item [$(d)$] $S_j=\widetilde{S_j}= S_j\ast S_j$.
     \end{itemize}
    \end{theorem}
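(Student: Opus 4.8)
The plan is to pass to the Fourier side and construct $\widehat S$ directly. By the Plancherel theorem the Fourier transform is a unitary map from $L^2(\HH)$ onto the space of square-integrable measurable fields $\lambda\mapsto A(\lambda)\in HS(L^2(\RR))$, and by properties (ii) and (iv) of the Fourier transform the requirement $S=\widetilde S=S\ast S$ becomes the fiberwise condition $\widehat S(\lambda)=\widehat S(\lambda)^\ast=\widehat S(\lambda)^2$ for a.e.\ $\lambda$, i.e.\ each $\widehat S(\lambda)$ is an orthogonal projection; being $\mathcal I$-band-limited means $\widehat S(\lambda)=0$ for $\lambda\notin\mathcal I$, and $S\in L^2(\HH)$ forces $\int_{\mathcal I}\operatorname{rank}\widehat S(\lambda)\,|\lambda|\,d\lambda<\infty$, so $\widehat S(\lambda)$ is necessarily of finite rank for a.e.\ $\lambda$. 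Applying the dilation formula for $\widehat{f(a\cdot)}$ with $a=2^j$ and $f=S$ and multiplying by $2^{4j}$ gives $\widehat{S_j}(\lambda)=D_{2^{-j}}\widehat S(2^{-2j}\lambda)D_{2^j}$; since $D_{2^{-j}}$ is unitary with adjoint $D_{2^j}$, this already shows $S_j$ is $\mathcal I_j$-band-limited (the right-hand side vanishes unless $2^{-2j}\lambda\in\mathcal I$) and that $(d)$ holds (unitary conjugation sends projections to projections). The crux of the proof is to choose the projections $\widehat S(\lambda)$ so that $(a)$ — which through the formula for $\widehat{S_j}$ amounts to nesting of the ranges of $\widehat{S_j}(\lambda)$ in $j$ — is compatible with $(c)$ — which amounts, for a.e.\ fixed $\lambda$, to those ranges exhausting $L^2(\RR)$ as $j\to+\infty$ — in spite of each fiber being finite-dimensional. (A constant choice $\widehat S(\lambda)\equiv P$ cannot work: property $(a)$ would force $P$ to commute with all $D_{2^n}$, impossible for a nonzero finite-rank $P$.)

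The construction: fix an orthonormal basis $(e_k)_{k\ge 0}$ of $L^2(\RR)$ and set
\[
W_n:=\operatorname{span}\{\,D_{2^i}e_k:\ |i|\le n,\ 0\le k\le n\,\},\qquad n\ge 0 .
\]
Each $W_n$ is finite-dimensional with $\dim W_n\le(2n+1)(n+1)$, one has $W_n\subseteq W_{n+1}$ and $\overline{\bigcup_n W_n}=L^2(\RR)$, and — the key structural property — $D_{2^m}W_n\subseteq W_{n+m}$ for every integer $m\ge 0$. Write $r:=\tfrac{\pi}{2d}$, partition $\mathcal I=[-r,r]\setminus\{0\}$ into the dyadic shells $\mathcal E_n:=\{\lambda:\ 2^{-2(n+1)}r<|\lambda|\le 2^{-2n}r\}$ ($n\ge 0$), and define $\widehat S(\lambda):=P_{W_n}$ (orthogonal projection onto $W_n$) for $\lambda\in\mathcal E_n$ and $\widehat S(\lambda):=0$ for $\lambda\notin\mathcal I$. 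This is a measurable field of finite-rank operators, and since $\mu(\mathcal E_n)\le C\,16^{-n}$ while $\dim W_n=O(n^2)$ we get $\int_{\RR^\ast}\|\widehat S(\lambda)\|_{HS}^2\,d\mu(\lambda)=\sum_n\dim W_n\cdot\mu(\mathcal E_n)<\infty$; thus $\widehat S$ is the Fourier transform of a unique $S\in L^2(\HH)$, which is $\mathcal I$-band-limited and satisfies $S=\widetilde S=S\ast S$ since each $\widehat S(\lambda)$ is a projection. Explicitly, when $2^{-2j}\lambda\in\mathcal E_n$ one has $\widehat{S_j}(\lambda)=P_{D_{2^{-j}}W_n}$.

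It remains to verify $(a)$–$(c)$. For $(a)$, by property (ii) it is enough to establish the fiberwise identities $\widehat S(\lambda)\widehat{S_j}(\lambda)=\widehat S(\lambda)$ for $j>0$ and $\widehat{S_j}(\lambda)\widehat S(\lambda)=\widehat{S_j}(\lambda)$ for $j<0$, for a.e.\ $\lambda$; both are automatic where the relevant operator is $0$, and where both are nonzero one has $\lambda\in\mathcal E_{n'}$ and $2^{-2j}\lambda\in\mathcal E_{n}$ with $n=n'+j$, so the first identity reduces to $W_{n'}\subseteq D_{2^{-j}}W_{n'+j}$, i.e.\ $D_{2^{j}}W_{n'}\subseteq W_{n'+j}$, and the second to $D_{2^{-j}}W_{n}\subseteq W_{n'}$, i.e.\ $D_{2^{|j|}}W_{n}\subseteq W_{n+|j|}$ — both instances of the containment $D_{2^m}W_p\subseteq W_{p+m}$; as $P_UP_V=P_U$ whenever $U\subseteq V$, the identities follow, and injectivity of the Fourier transform lifts them to $L^2(\HH)$. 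For $(b)$, since $\widehat{S_j}(\lambda)$ is a projection vanishing off $\mathcal I_j$, $\|f\ast S_j\|_2^2=\int_{\mathcal I_j}\|\widehat f(\lambda)\widehat{S_j}(\lambda)\|_{HS}^2\,d\mu(\lambda)\le\int_{\mathcal I_j}\|\widehat f(\lambda)\|_{HS}^2\,d\mu(\lambda)\to 0$ as $j\to-\infty$, because $\mathcal I_j$ decreases to a null set and $\|\widehat f(\cdot)\|_{HS}^2\in L^1(\mu)$. For $(c)$, write $\|f\ast S_j-f\|_2^2=\int_{\RR^\ast}\|\widehat f(\lambda)(I-\widehat{S_j}(\lambda))\|_{HS}^2\,d\mu(\lambda)$; for a.e.\ fixed $\lambda$, with $\lambda\in\mathcal E_\ell$ in absolute scale, one has $\widehat{S_j}(\lambda)=P_{D_{2^{-j}}W_{j-\ell}}$ for all large $j$, and the subspaces $D_{2^{-j}}W_{j-\ell}=\operatorname{span}\{D_{2^i}e_k:\ -2j+\ell\le i\le-\ell,\ 0\le k\le j-\ell\}$ increase with $j$ to a dense subspace of $L^2(\RR)$ (their union already contains $\{D_{2^{-\ell}}e_k\}_{k\ge 0}$), hence $\widehat{S_j}(\lambda)\to I$ strongly, so $\|\widehat f(\lambda)(I-\widehat{S_j}(\lambda))\|_{HS}\to 0$ because $\widehat f(\lambda)$ is Hilbert--Schmidt; together with the bound $\|\widehat f(\lambda)(I-\widehat{S_j}(\lambda))\|_{HS}\le\|\widehat f(\lambda)\|_{HS}$, dominated convergence yields $(c)$. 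The principal obstacle is building the filtration $\{W_n\}$ so that the nesting $D_{2^m}W_n\subseteq W_{n+m}$ needed for $(a)$ coexists with the exhaustion $\overline{\bigcup_j D_{2^{-j}}W_{j-\ell}}=L^2(\RR)$ needed for $(c)$ while every $W_n$ stays finite-dimensional; the symmetric dilation-orbit definition above is engineered precisely for this.
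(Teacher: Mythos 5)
Your proposal is correct, and its overall architecture coincides with the paper's: both work entirely on the Fourier-transform side, define $\widehat S(\lambda)$ as a measurable field of finite-rank orthogonal projections that is constant-rank on the dyadic shells $\{4^{-(n+1)}r<|\lambda|\le 4^{-n}r\}$ of $\mathcal I$, check square-integrability of the ranks against the Plancherel measure to invoke surjectivity of the Plancherel transform, and then derive $(a)$--$(d)$ from the covariance formula $\widehat{S_j}(\lambda)=D_{2^{-j}}\widehat S(2^{-2j}\lambda)D_{2^j}$ plus dominated convergence. Where you genuinely diverge is in the one nontrivial design choice: how to pick the ranges so that the nesting needed for $(a)$ and the exhaustion needed for $(c)$ coexist. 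The paper lets the range at $\lambda$ be the span of the first $4^k+1$ vectors of the $\lambda$-dependent basis $e_i^{\lambda/2\pi}=D_{|\lambda/2\pi|^{-1/2}}e_i$; the conjugation by $D_{2^{-j}}$ then maps the basis at $2^{-2j}\lambda$ exactly onto the basis at $\lambda$, so $\widehat S(\lambda)$ and $\widehat{S_j}(\lambda)$ become projections onto initial segments of one and the same orthonormal basis, and all products in $(a)$ and the tail estimate in $(c)$ are read off coordinatewise. You instead take ranges $W_n=\mathrm{span}\{D_{2^i}e_k:|i|\le n,\,0\le k\le n\}$ that are locally constant in $\lambda$ and carry the burden in the separate containment lemma $D_{2^m}W_n\subseteq W_{n+m}$ ($m\ge 0$), together with $P_UP_V=P_VP_U=P_U$ for $U\subseteq V$ and the compactness argument $\|T(I-P_j)\|_{HS}\to 0$ when $P_j\to I$ strongly and $T$ is Hilbert--Schmidt. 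Both routes are sound; the paper's choice buys completely explicit simultaneous diagonalization (at the cost of a $\lambda$-dependent basis and rank $4^k+1$ per shell), while yours buys a cleaner, more modular verification with smaller ranks $O(n^2)$ and makes transparent exactly which structural property of the filtration is being used. Your parenthetical sign slip ($W_{j-\ell}$ versus $W_{j+\ell}$, depending on the orientation of the shell index) in the proof of $(c)$ is purely notational and does not affect the argument.
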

\begin{proof}
Take  $\mathcal{I}_0:= \mathcal{I}$.  We intend to show that there exists a function $S$ which
is $\mathcal{I}_0$-band-limited and
satisfies the assertion of our theorem.
   We start from the Fourier transform side, i.e, by constructing Hilbert-Schmidt operators  $\hat
S(\lambda)$
    associated to   $\lambda\in \RR^\ast$. For this  purpose we choose an
    orthonormal basis $\{e_i\}_{i\in \NN_0}$ in $L^2( \RR)$.  For any $ \lambda\not=0$  define
   $e_i^\lambda= D_{\left|\lambda\right|^{-1/2}}e_i$. Observe that for any $\lambda$,
    $\{e_i^\lambda\}_i$ is an ONB of $L^2(\RR)$ since the dilation operators
$D_{\left|\lambda\right|^{-1/2}}$
    are
      unitary. Therefore $\{\{e_i^\lambda\}_i\}_\lambda$ is a measurable family of orthonormal
bases in $L^2(\RR)$.
     \\

  Let $\lambda\not=0$ be such that  $\lambda\in \mathcal{I}_0$. For $I_0= \cup_k I_0^k$ where 
  $I_0^k= [ -\frac{\pi}{2^{2k+1}d},  -\frac{\pi}{2^{2k+3}d}) \cup (\frac{\pi}{2^{2k+3}d}, \frac{\pi}{2^{2k+1}d}]$, 
  define the operator 
  $\widehat S(\lambda)$ as follows:
  \begin{align}\notag
\widehat{S}(\lambda)=
\begin{cases}
\sum_{i=0}^{2^{2k}}\big( e_i^{\frac{\lambda}{2\pi}}\otimes e_i^{\frac{\lambda}{2\pi}}\big)
&\text{if }
\lambda \in I_0^k,
 \;\;\text{for some }
k\in \NN_0,\\
\notag
 0 &  \text{otherwise}.
\end{cases}
\end{align}
Therefore for any  $ \lambda \in I_0^k$, 
the operator $\widehat S(\lambda)$
 is a projection operator  on the first $2^{2k}+1$  elements of the  orthonormal basis
$\{e_i^{\frac{\lambda}{2\pi}}\}_{i\in \NN_0}$, where $e_i^{\frac{\lambda}{2\pi}}= D_{\left|
\frac{\lambda}{2\pi}
\right|^{-1/2}}e_i$.
 The definition of $\widehat S$ entails  the following consequences:
\begin{itemize}
  \item [(i)]\;  For  $k\geq 0$ and  
$\lambda \in I_0^k$ is 
 $\parallel \widehat{S}(\lambda) \parallel_{H.S}^2=
2^{2k}+1  $
   \item[(ii)]   $   \int_{\mid \lambda\mid \leq \frac{\pi}{2d}} \parallel
\widehat{S}(\lambda)\parallel_{H.S}^2
  d\mu(\lambda)
 =   \underset{k=0}{\sum} \int_{ I_0^k}
(2^{2k}+1) d\mu(\lambda)
 <\infty,\\
  where\; d\mu(\lambda)= (2\pi)^{-2}\left|\lambda\right|d\lambda,\;  and $
    \item [(iii)]\;$\widehat{S}(\lambda)=\widehat S(\lambda)^\ast= \widehat S(\lambda) \circ
\widehat S(\lambda), \; \forall \lambda\neq 0.$
\end{itemize}
Observe that  (ii) implies    that the vector field $\{\widehat
S(\lambda)\}_\lambda$ on $\RR^\ast $ is contained in $ \int_{\RR^\ast}^\oplus
L^2(\RR)\otimes L^2(\RR)d\mu(\lambda) $  and hence, by the surjectivity part of the Plancherel theorem,
$\widehat S$  has a preimage  $S$ in $L^2(\RR)$ with Fourier transform  $\widehat S$,
given as above.
Property (iii) shows  that $S$ is a self-adjoint   convolution, idempotent by  the
convolution theorem.\\

   Suppose $j\in \ZZ$ and  $S_j:= 2^{4j}S(2^j.)$.
  Using the equivalence of the representations $\rho_\lambda$ and
  $\rho_{2^{-2j}\lambda}$, the relation
 (\ref{dilation-of-function}) and the fact that $D_{2^j}^\ast= D_{2^{-j}}$  we obtain
  \begin{align}\label{S-j}
 \widehat{S_j}(\lambda)= D_{2^{-j}}\widehat S(2^{-2j}\lambda)D_{2^j}.
  \end{align}
(\ref{S-j}) implies
  $\widehat{S_j}(\lambda)=0$ for any
  $\left| \lambda\right| > \frac{2^{2j}\pi}{ 2d}$, and hence 
  the function $S_j$ is $\mathcal{I}_j$-band-limited, where
$\mathcal{I}_j=[-\frac{2^{2j}\pi}{2d}, 0)\bigcup (0, \frac{2^{2j}\pi}{2d}]$.
As a consequence of (iii), the relation (\ref{S-j}) shows that   $S_j$ is a
self-adjoint
    and convolution idempotent, which proves $(d)$. \\
To prove $(a)$, suppose $j>0$ and  $\lambda\in  \mathcal{I}_j$. Then
  $ 2^{-2j}\lambda \in \mathcal{I}_0$.  Hence there exists a  non-negative integer $k_j$ such that
  
$$\frac{\pi}{ 2^{(2k_j+3)}d}<  \left| 2^{-2j}\lambda\right| \leq \frac{\pi}{ 2^{(k_j+1)}d},$$

or equivalently
$ \lambda \in I_0^{k_j}.$

For the case $k_j<j$, observe that $\widehat S(\lambda)=0$.  For the case
$k_j\geq j$,
 from the definition of $\widehat{S}$   we have the following:
  \begin{align}\notag
  \widehat{S}(\lambda)&= \sum_{i=0}^{2^{2(k_j-j)}}
 e_i^{\frac{\lambda}{2\pi}  }\otimes e_i^{\frac{\lambda}{2\pi}}\quad \text{and} \\
  \label{for-S-j}
\widehat S(2^{-2j}\lambda)&= \sum_{i=0}^{2^{2k_j}}
 e_i^{ \frac{ \lambda}{2^{2j+1}\pi} }\otimes  e_i^{ \frac{ \lambda}{2^{2j+1}\pi} }.
  \end{align}
  
Recall that, from the definition of the family of  orthonormal bases $\{e_i^\lambda\}_i$,
 $e_i^{ \frac{ \lambda}{2^{2j+1}\pi} }$
can be read as below:
\begin{align}\label{dilation-of-ONB}
e_i^{ \frac{ \lambda}{2^{2j+1}\pi} }= D_{\left|  \frac{2^{-2j}\lambda}{2\pi}\right|^{-1/2}}e_i=
  D_{2^j}\left(D_{\left| \frac{\lambda}{2\pi}\right|^{-1/2}}e_i\right)= D_{2^j
}e_i^{\frac{\lambda}{2\pi}}.
  \end{align}
Plugging $(\ref{dilation-of-ONB})$  into $(\ref{for-S-j})$, we get
  \begin{align}\notag
\widehat S(2^{-2j}\lambda)=& \sum_{i=0}^{2^{2k_j}} \left(D_{2^j }
 e_i^{ \frac{\lambda}{2\pi}  }\right)\otimes \left(D_{2^j }e_i^{ \frac{\lambda}{2\pi}}\right),
  \end{align}
and hence
  \begin{align}\label{DSj}
  \widehat{S_j}(\lambda)=D_{2^{-j}}\widehat S(2^{-2j}\lambda)D_{2^{j}}=
  \sum_{i=0}^{2^{2k_j}}
 e_i^{ \frac{\lambda}{2\pi}}
  \otimes e_i^{ \frac{\lambda}{2\pi}}.
   \end{align}
  Observe that for any $\lambda\in \mathcal{I}_j$,  the operator $\widehat S(\lambda)$
  is a projection on the first $2^{2(k_j-j)}+1$
elements of the orthonormal basis $\{ e_i^{ \frac{\lambda}{2\pi}}\}$ for some
suitable $k_j\geq j$,
whereas
$\widehat{ S_j}(\lambda)$ is a projection on the  first $2^{2k_j}+1$ elements
of  the same orthonormal basis. Hence we get
\begin{align}\label{for-negative-j}
\widehat S(\lambda)\circ \widehat{ S_j}(\lambda) = \widehat{ S_j}(\lambda)
 \circ\widehat S(\lambda)
 =\sum_{i=0}^{2^{2(k_j-j)}} e_i^{ \frac{\lambda}{2\pi}} \otimes e_i^{ \frac{\lambda}{2\pi}}
 = \widehat S(\lambda),
\end{align}
which is a projection on the first $2^{2(k_j-j)}+1$ elements
of  the  orthonormal basis $\{ e_i^{ \frac{\lambda}{2\pi}}\}$. For fixed  $j>0$  since
the relation (\ref{for-negative-j}) holds for any
 $\lambda\in \mathcal{I}_j $, so by applying
the convolution and the Plancherel theorem respectively we obtain $S\ast S_j=S$, which proves the first hypothesis of (a). \\
Likewise for
  $j<0$, suppose $\lambda\in \mathcal{I}_j$. Then for some $k_j\in \NN_0$,
  $\frac{\pi}{ 2^{2k_j+3}d}<  \left| 2^{-2j}\lambda\right| \leq \frac{\pi}{ 2^{2k_j+1}d}$.
   Analogous to the previous case, the operator $\widehat S(\lambda)$
    is a projection on the first $2^{2(k_j-j)}+1$
elements of the orthonormal basis $\{ e_i^{ \frac{\lambda}{2\pi}}\}$ and
$\widehat{ S_j}(\lambda)$ is a projection on the  first $2^{2k_j}+1$ elements
of  the same orthonormal basis. Thus
\begin{align}\label{for-positive-j}
\widehat S(\lambda)\circ \widehat{ S_j}(\lambda) = \widehat{ S_j}(\lambda)
 \circ\widehat S(\lambda)
 =\sum_{i=0}^{2^{2k_j}} e_i^{ \frac{\lambda}{2\pi}} \otimes e_i^{ \frac{\lambda}{2\pi}}
 = \widehat{S_j}(\lambda).
\end{align}
  Once again,
 applying the convolution and Plancherel theorems in the relation (\ref{for-positive-j}) yields
$
S\ast S_j= S_j$, and hence $(a)$ is completely proved.
\\
 To prove $(b)$,
   suppose $j\in \ZZ$ and $f\in L^2(\HH)$.  Then $f\ast S_j\in L^2(\HH)$ by the structure and properties of the
   function $S$. Before we start to give a proof  for this part, observe
    that, for any $\lambda\not=0$, since each $\widehat{S}(\lambda)$ is a projection,
    the operator
   $\widehat{S}(\lambda)$  is bounded and has operator norm  less than or equal to $1$. Hence for any
$j\in\ZZ$  and
    $\lambda\not=0$
  we have
$$\parallel \widehat{S_j}(\lambda)\parallel_\infty=
 \parallel D_{2^{-j}}\widehat S(2^{-2j}\lambda)D_{2^j}\parallel_\infty\leq 1.$$
 Using the inequality  and
   applying the
       Plancherel and convolution theorems respectively
      we get the followings:
      \begin{align}\label{keine-Ahnung}
 \parallel f\ast S_j\parallel_2^2=
\parallel \widehat{(f\ast S_j)}\parallel_{H.S}^2
  &=\int_{\RR^\ast}\parallel \widehat{(f\ast S_j)}(\lambda)\parallel_2^2  d\mu(\lambda) \\
\notag
 &=\int_{0<\left|4^{-j}\lambda\right|\leq \frac{\pi}{2d}}  \parallel \widehat f(\lambda)
 \circ \widehat{S_j}(\lambda)
 \parallel_{H.S}^2
 d\mu(\lambda)\\ \notag
&\leq\int_{0<\left|4^{-j}\lambda\right|\leq \frac{\pi}{2d}}
\parallel \widehat f(\lambda) \parallel_{H.S}^2 \parallel \widehat{S_j}(\lambda)\parallel_\infty^2
 d\mu(\lambda)\\\notag
&\leq \int_{0<\left|4^{-j}\lambda\right|\leq \frac{\pi}{2d}} \parallel \widehat f(\lambda)
\parallel_{H.S}^2
 d\mu(\lambda)\\\label{for-convergences}
&=\int_{\RR^\ast}\parallel \widehat f(\lambda) \parallel_{H.S}^2
\chi_{_{\mathcal{I}_j}}(\lambda) ~
d\mu(\lambda),
 \end{align}
 where $\chi$ denotes the characteristic function and $ d\mu(\lambda)=(2\pi)^{-2}
\left|\lambda\right|d\lambda$.
             If we take  the limit of the right hand side in (\ref{for-convergences}),
 since $\int_\lambda \parallel \widehat f(\lambda) \parallel_{H.S}^2
d\mu(\lambda)<\infty$,   then by the dominated convergence theorem
may pass the
limit into the integral and hence
 \begin{align}\notag
  \underset{j\rightarrow -\infty}{\lim}
 \int_{\RR^\ast}\parallel \widehat f(\lambda) \parallel_{H.S}^2
\chi_{_{\mathcal{I}_j}}~
d\mu(\lambda) 
 =0,
 \end{align}
 The latter   implies that the limit of the  left hand side in the  relation
  $(\ref{keine-Ahnung})$ is also zero as
 $j\rightarrow -\infty$, i.e.,
            $\underset{j\rightarrow -\infty}{\lim}\parallel f\ast S_j\parallel_2=0$,
        which proves $(b)$. \\
        In order to  prove $(c)$, suppose $f$ is in $L^2(\HH)$. Recall  that   
$\{e_i^{\frac{\lambda}{2\pi}} \}_{i=0}^\infty$
constitutes an orthonormal basis for $L^2(\RR)$ for any fixed $\lambda$. Therefore  the identity operator $I$ on $L^2(\RR)$ can
be read as
  $I=\sum_{i=0}^\infty e_i^{\frac{\lambda}{2\pi}}\otimes e_i^{\frac{\lambda}{2\pi}},$
and hence  the operator $\hat f(\lambda)$   can be represented as
\begin{align}\label{representation-for-f}
\widehat f( \lambda)=
\sum_{i=0}^\infty \left(\widehat f(\lambda) e_i^{\frac{\lambda}{2\pi}} \right)\otimes
e_i^{\frac{\lambda}{2\pi}}.
\end{align}
Therefore for any $j\in\ZZ$, according to the  representation of $\widehat f( \lambda)$
in (\ref{representation-for-f})  and  the  representation of the operator
 $ D_{2^{-j}}\widehat S(2^{-2j}\lambda)D_{2^j}$ in $(\ref{DSj})$, for some $k_j\geq j$
 we obtain  the followings:
   \begin{align}\notag
 \left\|  \left[\widehat f(\lambda)\circ  D_{2^{-j}}\widehat S(2^{-2j}\lambda)D_{2^j}\right]-
 \widehat f(\lambda)
\right\|_{H.S}^2  
    &=  \left\|\sum_{i=2^{2k_j}+1}^\infty \left(\widehat
f(\lambda)e_i^{\frac{\lambda}{2\pi}} \right)\otimes e_i^{\frac{\lambda}{2\pi}}
\right
\|_{H.S}^2\\\label{for-infty}
  &=  \sum_{i=2^{2k_j}+1}^\infty \left\| \widehat
f(\lambda)e_i^{\frac{\lambda}{2\pi}}\right\|_2^2.
 \end{align}
 Letting $j\rightarrow \infty$ (hence  $k_j\rightarrow \infty$),  the right hand side
 of $(\ref{for-infty})$ goes to zero. From the other side using the
            Plancherel theorem we have
           \begin{align}\label{just-an-equality}
 \parallel f\ast S_j -f\parallel_2^2&=\int_{\RR^\ast}
\left\|  \left[ \widehat f(\lambda)\circ  D_{2^{-j}}\widehat S(4^{-j}\lambda)D_{2^j}\right]-\widehat
f(\lambda)  \right\|_{H.S}^2 d\mu(\lambda)\\\notag
&= \int_{\RR^\ast}
\sum_{i=2^{2k_j}+1}^\infty \big\|\widehat
f(\lambda) e_i^{\frac{\lambda}{2\pi}} \big\|_2^2 d\mu(\lambda).
 \end{align}
As in the proof of 
    $(b)$, using  the dominated convergence theorem
   in the relation $(\ref{just-an-equality})$ one gets:
         \begin{align}\notag
 \underset{j\rightarrow \infty}{\lim}\parallel f\ast S_j -f\parallel_2^2  = \int_{\RR^\ast} \underset{j\rightarrow \infty}{\lim}\sum_{i=2^{2(k_j+j)}+1}^\infty \big\|\widehat
f(\lambda) e_i^{\frac{\lambda}{2\pi}} \big\|_2^2 d\mu(\lambda) =0,
 \end{align}
  as desired, which completes the proof of the theorem.
  \end{proof}
  
  \begin{remark} In the previous theorem, one 
could for instance  take the orthonormal basis of $\{\phi_n\}_{n\in \NN_0}$ in $L^2(\RR)$,
where
$\phi_n$ are  Hermite functions, and  for any $\lambda\not=0$, $\phi_n^\lambda$ are given  by
$\phi_n^\lambda(x)=D_{\left|\lambda\right|^{-1/2}}\phi_n=
  \left| \lambda\right|^{\frac{1}{4}}\phi_n(\sqrt{\mid \lambda\mid}x)$ for all $x\in \RR$.
\end{remark}
     
Now that we have constructed a function $S$ as in above theorem, with the listed properties,
the next step in the construction of an MRA via the function $S$  will be 
the
definition of a closed left invariant subspace of $L^2(\HH)$, $V_0$.
 Define $V_0= L^2(\HH)\ast S$, as the central subspace of an MRA. It is obvious that $V_0$ is  closed   and possesses 
 the following additional properties:
  \begin{enumerate}
    \item \;  $V_0$ is contained in the set of all  bounded and continuous functions in
$L^2(\HH)$. Hence
    $V_0$ is a proper subspace of  $L^2(\HH)$.
        The  boundedness of elements in $V_0$  is easy to see by the definition of convolution operator
and Cauchy-Schwartz inequality:
 $$
 \mid g\ast S(x)\mid \leq \parallel f\parallel_2 \parallel S\parallel_2 \quad \forall x\in\HH\quad
g\in L^2(\RR),
$$
\item\; Since $S$ is convolution idempotent then $S$ behaves as an identity element in $V_0$ with respect to group convolution. More precisely, 
  $f\ast S=f$ for any $f\in V_0$.
\item\; Suppose $\Gamma$ is  any lattice in $\HH$. Then
    $L_\gamma (g\ast S)= L_\gamma g\ast S$  which shows  $V_0$ is left shift-invariant under
$\Gamma$.
  \item\; An easy computation shows that 
   $D_{2^j}(g\ast S)= D_{2^j}g\ast D_{2^j}S$ for any $g\in L^2(\HH)$  and $j\in \ZZ$.
       \end{enumerate}

\begin{remark}
Observe  that not every space   $L^2(\HH)\ast S$ with
$S=\tilde{S}  =S\ast S$ gives rise to a normalized tight frame  of the form $\{L_\gamma \phi\}_\gamma$
 for some
$\phi\in L^2(\HH)\ast S$. As shall  be seen later, this depends heavily on the \textit{multiplicity
function}
 associated to $S$,
see Definition  \ref{multiplicity-function} and   Theorem \ref{fuehrstheorem}.
\end{remark}

 Recall that
$L_{2^{-j}\gamma}D_{2^{-j}}
 S(\omega)= 2^{j/2}S(\gamma^{-1}(2^{j}\omega)) \quad \forall j\in \ZZ\; ,\gamma\in \Gamma, \;
x\in
\HH.
$
 Next define $V_1= L^2(\HH)\ast (2^4 S(2\cdot))$.
$V_1$ is left-invariant under $2^{-1}\Gamma $ and is a closed subspace of
$L^2(\HH)$ as well. The functions in $V_1$  are  continuous bounded functions,   and from (\ref{S-j}) are
$\mathcal{I}_1$-band-limited.
  With regard to the consequence $(a)$ of Theorem
 \ref{sinc-type-function}, for any $f\in V_0$ we have
 $$f=f\ast S=f \ast (S\ast 2^4S(2\cdot))=
 (f\ast S)\ast (2^4S(2\cdot)).$$
 The latter shows that the  conclusion  $V_0\subseteq V_1$ holds. 
 By continuing in this manner, we define $V_2= L^2(\HH)\ast (2^8S(2^2\cdot))$ to be the
closed subspace
    of  functions  which are  $\mathcal{I}_2$-band-limited.
    Obviously,  with a similar argument as above, one can easily prove that  $V_1\subseteq V_2$.
\\
 Similarly, one can define subspaces $V_3\subseteq V_4\subseteq \cdots$. On the other hand one may
define negatively indexed subspaces.
 For example, we define
 $$V_{-1}= L^2(\HH)\ast 2^{-4}S(2^{-1}\cdot).$$
  This space contains the
 functions
 which are    $\mathcal{I}_{-1}$-band-limited  and obviously
   $V_{-1}\subseteq V_0$. Again, one may continue in this way to construct  the sequence of closed
and left  $(2^{-j}\Gamma)$-shift-invariant subspaces of $L^2(\HH)$:
 \begin{align}\label{nested-property}
 \{0\}\subseteq\cdots V_{-2} \subseteq V_{-1} \subseteq V_{0} \subseteq V_{1} \subseteq
 L^2(\HH),
\end{align}
which are scaled versions of the central space $V_0$.
Our next aim is to show that, in the sense of Definition \ref{def:MRA-H},  the sequence of  closed
subspaces $\{V_j\}$ forms a FMRA  of $L^2(\HH)$. For this reason we  must  show that the all
properties $1$-$6$  in Definition \ref{def:MRA-H} hold for  the sequence $\{V_j\}$.
But (\ref{nested-property}) proves  the nested property of $V_j's$.  
The density and trivial intersection of $V_j's$ are given in the next theorem: 
 
\begin{theorem}\label{about-MRA}
$\{V_j\}_{j\in \ZZ}$ is dense in $L^2(\HH)$ and has trivial intersection.
\end{theorem}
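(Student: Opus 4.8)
The plan is to identify, for each $j\in\ZZ$, the convolution operator $T_j\colon f\mapsto f\ast S_j$ with the orthogonal projection of $L^2(\HH)$ onto $V_j$, and then to read off both assertions directly from parts $(b)$ and $(c)$ of Theorem~\ref{sinc-type-function}. Recall that by construction $V_j=L^2(\HH)\ast S_j$ (indeed $V_0=L^2(\HH)\ast S$, $V_1=L^2(\HH)\ast 2^4S(2\cdot)$, etc., and $S_j=2^{4j}S(2^j\cdot)$).

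\textbf{Step 1: $T_j$ is the orthogonal projection onto $V_j$.} The bound $\|\widehat{S_j}(\lambda)\|_\infty\le 1$ established inside the proof of Theorem~\ref{sinc-type-function}, together with the Plancherel and convolution theorems, gives $\|f\ast S_j\|_2\le\|f\|_2$, so $T_j$ is bounded on $L^2(\HH)$. From property $(iv)$ of the group Fourier transform and $S_j=\widetilde{S_j}$ (part $(d)$) we get $T_j^\ast f=f\ast\widetilde{S_j}=f\ast S_j=T_jf$, so $T_j$ is self-adjoint; and $S_j\ast S_j=S_j$ (again part $(d)$) gives $T_j^2=T_j$. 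Hence $T_j$ is a self-adjoint idempotent, its range $L^2(\HH)\ast S_j=V_j$ is closed, and $T_j$ is precisely the orthogonal projection $P_j$ onto $V_j$. In particular, $f\in V_j$ implies $f\ast S_j=f$.

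\textbf{Step 2: density, then triviality.} For density, fix $f\in L^2(\HH)$ and $\varepsilon>0$; by part $(c)$ there is a $j$ with $\|f\ast S_j-f\|_2<\varepsilon$, and $f\ast S_j\in V_j\subseteq\bigcup_k V_k$, so $\bigcup_j V_j$ is dense in $L^2(\HH)$. For triviality, let $f\in\bigcap_j V_j$. Then for every $j<0$ we have $f\in V_j$, hence $\|f\|_2=\|f\ast S_j\|_2$ by Step 1; letting $j\to-\infty$ and applying part $(b)$ of Theorem~\ref{sinc-type-function} (which holds for all $f\in L^2(\HH)$), the right-hand side tends to $0$, so $f=0$. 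This gives $\bigcap_j V_j=\{0\}$.

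\textbf{Main obstacle.} The genuine analytic content — the two limit statements — is already packaged in Theorem~\ref{sinc-type-function}, so what remains is mostly bookkeeping. The one point deserving care is the identification in Step 1: one must check that $L^2(\HH)\ast S_j$ is closed (it is, as the range of a bounded idempotent) and that $T_j$ is genuinely self-adjoint, so that the fixed-point identity $f\ast S_j=f$ is legitimately available for $f\in V_j$. Once that is in place, the density statement follows from $(c)$ and the trivial-intersection statement from $(b)$ with no further work, matching the remark made earlier in the paper that triviality of the intersection is a consequence of the other FMRA conditions.
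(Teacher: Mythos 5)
Your proposal is correct and follows essentially the same route as the paper: both identify the orthogonal projection onto $V_j$ with convolution against $S_j$ and then deduce density from part $(c)$ and triviality of the intersection from part $(b)$ of Theorem \ref{sinc-type-function}. The only differences are cosmetic — you verify explicitly that $f\mapsto f\ast S_j$ is a bounded self-adjoint idempotent (which the paper simply asserts when it writes $P_j f=f\ast 2^{4j}S(2^j\cdot)$), and you prove density by direct approximation of $f$ by $f\ast S_j\in V_j$ rather than by showing the orthogonal complement of $\bigcup_j V_j$ is trivial.
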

\begin{proof}
To show the density of  $\{V_j\}_{j\in \ZZ}$ in $L^2(\HH)$, i.e.,
$\overline{\bigcup_{j\in \ZZ} V_j}= L^2(\HH)$, suppose $P_j$ denotes the projection operator of
$L^2(\HH)$ onto
 $V_j$.
Then $P_j$ is given by
\begin{align}\label{definition-of-Projections}
P_j:\; f\rightarrow f\ast 2^{4j} S(2^{j}\cdot).
\end{align}
Therefore the density of  $\{V_j\}_{j\in \ZZ}$ in $L^2(\HH)$ is equivalent to saying that
 for any $f$ in $L^2(\HH)$   with
$P_j f=0 \;\forall j\in\ZZ$, we have  $f=0$.  Theorem
\ref{sinc-type-function} $(c)$ demonstrates this. More precisely
$$
0= P_jf=f\ast S_j\rightarrow f \;as\; j\rightarrow \infty
$$
which implies $f=0$.\\
 For the triviality of the intersection, observe that
 $f\ast 2^{4j}S(2^j\cdot)= f$ for any $f\in V_j$. Therefore for 
 any 
  $f\in \bigcap V_j$  we have  $f\ast 2^{4j}S(2^j\cdot)= f$ for all $j$ . Therefore $(b)$  in Theorem
 \ref{sinc-type-function} implies that $f=0$, as desired.
 \end{proof}
 The other significant properties of $V_j's$ are collected in the next remark: 
 \begin{remark}
 \begin{itemize}
     \item[(1)]  Property $4$ in Definition \ref{def:MRA-H} is trivial  from the construction of
$V_j$'s.  This property enables us to  pass up and down among the spaces $V_j$ by scaling
    \begin{align}\notag
    f\in V_j \Longleftrightarrow f(2^{k-j}\cdot) \in V_k.
    \end{align}
    \item[(2)] Generally, when $V_0$ is left shift-invariant under some lattice $\Gamma$, the
spaces $V_j$ are shift-invariant under $2^{-j}\Gamma$.
 We will return to this fact  later
and will show  how one can  choose an appropriate lattice $\Gamma$ such that
     it allows the construction of a wavelet frame on $\HH$.
      \item [(3)]   Observe that, by contrast to the  multiresolution analysis
on $\RR$, condition $6$  in Definition \ref{def:MRA-H} requires the existence of some frame
generator
 $\phi$, not necessarily $\phi=S$.
 This is due to the fact that we did not assume any other conditions for the selection of the
orthonormal basis $\{e_i^\lambda\}_i$  for the construction of the Hilbert-Schmidt operators
$\widehat S(\lambda)$ (respectively $S$).
   This is one difference between  our defined  MRA of
                      $L^2(\HH)$ and
the one defined for  $L^2(\RR)$. In the case of $\RR$ the sinc function by which the subspaces $V_j$'s are
defined, generates an ONB for $V_0$ and hence for all $V_j$,  under some other suitable discrete
subgroups of $\RR$. In our case on the Heisenberg group we shall show the existence of  a function
$\phi$ in $V_0$ such that its left translations under a suitable  $\Gamma$ form a normalized
tight
frame for $V_0$ and hence  for all $V_j$ under $2^{-j}\Gamma$.
\end{itemize}
 \end{remark}
 As we briefly mentioned  above,   we shall show the existence of a function
$\phi$ in $V_0$ such that property $6$ in Definition \ref{def:MRA-H} holds for $V_0$. We will
observe below that this fact strongly depends on the structure of $S$ and definition of
$V_0$.
 To achieve this  goal,  we recall the following definition here:
  \begin{definition}\label{multiplicity-function} Suppose $\mathcal{H}$ be a left-invariant
subspace
 of $L^2(\HH)$ and $P$ be
the projection operator of $L^2(\HH)$ onto $\mathcal{H}$. There exists a unique  associated
projection field
$(\widehat{ P}_{\lambda})_\lambda$  satisfying
$
 \widehat{P(f)}(\lambda)=\hat f(\lambda)\circ
 \widehat P_{\lambda}\quad \forall\;f\in L^2(\HH).
$
 The associated  multiplicity function $
{m}_\mathcal{H}$ is then  defined by
 \begin{align}\notag
  {m}_\mathcal{H}:\; \RR^\ast \rightarrow \NN_0\cup \{\infty\};\quad   {m}_\mathcal{H}(\lambda)=
rank(\widehat P_\lambda).
 \end{align}
 $\mathcal{H}$ is called \textit{band-limited} if the support of its associated multiplicity
function  $m_\mathcal{H}$, $\Sigma(\mathcal{H})$, is bounded in $\RR^\ast$.
    \end{definition}
     
     Following the notation of \cite{Fuehr05},
    the  next theorem provides a characterization of closed left  shift-invariant subspaces of
$L^2(\HH)$ which admit a tight frame.
However, before we state this theorem we need to introduce two numbers associated to a lattice
$\Gamma$. The number $d(\Gamma)$ refers to a positive integer number $d$ for which
$\alpha(\Gamma_d)= \Gamma$ for some $\alpha\in Aut(\HH)$, where  $\Gamma_d$ is a lattice in $\HH$
and is defined
by
\begin{align}\label{lattice}
\Gamma_d:=\big\{(m,dk,l+\frac{1}{2}dmk):\; m,k,l\in \ZZ\big\}.
\end{align}
  It is easy to check that $ \Gamma_d$ forms a group under the group operation
(\ref{group-action}). Observe that
    due to Theorem 6.2 in \cite{Fuehr05},
     such a strictly positive number $d$  exists
    and is uniquely determined.  As well, we define  $r(\Gamma)$ be the  unique positive real
satisfying
   \begin{align}\notag
  \Gamma\cap Z(\HH)=\left\{(0,0,r(\Gamma)k); \;k\in\ZZ\right\},
\end{align}
 where  $Z(\HH)$ denotes the center of $\HH$, $Z(\HH)= \{0\}\times \{0\}\times \RR\subset\HH$.
With the above notation  we state the following theorem.  
    \begin{theorem}\label{fuehrstheorem}$\cite{Fuehr05}$
  Suppose $\mathcal{H}$ is a left-invariant subspace of $L^2(\HH)$  and  $ {m}_\mathcal{H}$  is
its associated  multiplicity function. Then there exists a tight frame $($hence normalized tight
frame$)$ of the form $\{L_\gamma\phi\}_{\gamma\in \Gamma}$ with an appropriate $\phi\in \mathcal{H}$
if
and only if the inequality
    \begin{align}\label{inequality-for-multiplicity}
    m_\mathcal{H}(2\pi\lambda)\left| 2\pi\lambda\right|+
    m_\mathcal{H}\left(2\pi\lambda-\frac{1}{r(\Gamma)}\right)\left|
2\pi\lambda-\frac{1}{r(\Gamma)}\right|\leq \frac{1}{d(\Gamma)r(\Gamma)}    \end{align}
    holds for $m_\mathcal{H}$ almost everywhere.\\
     From  the  inequality
$(\ref{inequality-for-multiplicity})$ it can be read off that
   $\mathcal{H}$ is band-limited. In fact, the support of $m_\mathcal{H}$
    is contained in the interval
$\left[ -\frac{1}{d(\Gamma)r(\Gamma)}, \frac{1}{d(\Gamma)r(\Gamma)}\right]$ up to a set of measure
zero.  
    \end{theorem}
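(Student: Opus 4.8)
The plan is to transport the problem to the Plancherel side of $\HH$, fiberize the normalized--tight--frame condition along the lattice $\Gamma$ in two stages — first over its central subgroup $\Gamma\cap Z(\HH)$, then over the complementary copy of $\ZZ^2$ — and reduce the existence of a generator $\phi$ to a fiberwise Weyl--Heisenberg (vector--valued Gabor) frame problem whose ``dimension count'' at almost every spectral parameter turns out to be exactly the inequality \eqref{inequality-for-multiplicity}.

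First I would normalize the lattice. By the classification of lattices in $\HH$ (Theorem 6.2 of \cite{Fuehr05}) there is $\alpha\in Aut(\HH)$ with $\Gamma=\alpha(\Gamma_{d(\Gamma)})$, and the unitary $U_\alpha\colon f\mapsto|\det\alpha|^{1/2}f\circ\alpha$ conjugates $L_{\alpha(\gamma)}$ into $L_\gamma$; on the Fourier side $U_\alpha$ is, by the automorphism version of the lemma leading to \eqref{dilation-of-function}, a rescaling of the operator fibres together with a rescaling of the Plancherel density, so that $m_\mathcal{H}$ and the numbers $d(\Gamma),r(\Gamma)$ transform compatibly and the shape of \eqref{inequality-for-multiplicity} is preserved. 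Hence one may assume $\Gamma=\Gamma_d$, for which $\Gamma\cap Z(\HH)=\{(0,0,l):l\in\ZZ\}$, so $r(\Gamma)=1$ and $d(\Gamma)=d$. After the Plancherel transform, $\mathcal{H}$ becomes the space of fields $\lambda\mapsto A(\lambda)\in HS(L^2(\RR))$ with $A(\lambda)\widehat P_\lambda=A(\lambda)$, a candidate $\phi\in\mathcal{H}$ becomes such a field $\widehat\phi$, and by property (iii) of the group Fourier transform together with the Plancherel theorem,
\[ \langle f,L_\gamma\phi\rangle=\int_{\RR^\ast}\mathrm{tr}\!\big(\widehat\phi(\lambda)^\ast\rho_\lambda(\gamma)^\ast\widehat f(\lambda)\big)\,d\mu(\lambda),\qquad d\mu(\lambda)=(2\pi)^{-2}|\lambda|\,d\lambda , \]
so $\{L_\gamma\phi\}_{\gamma\in\Gamma}$ is a normalized tight frame for $\mathcal{H}$ precisely when $\sum_{\gamma\in\Gamma}|\langle f,L_\gamma\phi\rangle|^2=\int_{\RR^\ast}\|\widehat f(\lambda)\|_{H.S}^2\,d\mu(\lambda)$ for every such $\widehat f$.

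Then I would fiberize. Writing $\gamma=(m,dk,l+\tfrac12 dmk)=\gamma_{m,k}\cdot(0,0,l)$ with $\gamma_{m,k}=(m,dk,\tfrac12 dmk)$ and using $\rho_\lambda(0,0,l)=e^{i\lambda l}\,\mathrm{Id}$, for each fixed $(m,k)$ the numbers $\langle f,L_{\gamma_{m,k}(0,0,l)}\phi\rangle$, $l\in\ZZ$, are the Fourier coefficients (in the variable dual to the centre) of the trace field, so Parseval for Fourier series periodizes the integrand with period $2\pi/r(\Gamma)$ and collapses $\int_{\RR^\ast}d\mu$ to an integral over one period; once $\mathcal{H}$ is seen to be band-limited, at almost every point of a fundamental interval only the two lifts $2\pi\lambda$ and $2\pi\lambda-\tfrac1{r(\Gamma)}$ survive, which is the source of the two summands of \eqref{inequality-for-multiplicity}, the weights $|2\pi\lambda|$ and $|2\pi\lambda-\tfrac1{r(\Gamma)}|$ being the Plancherel density at the lifts. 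On the remaining variables $(m,k)\in\ZZ^2$ one has $\rho_\lambda(m,0,0)=$ a modulation and $\rho_\lambda(0,dk,0)=$ a translation by $dk$ on $L^2(\RR)$, so $\{\rho_\lambda(\gamma_{m,k})\}_{m,k}$ is, up to the cocycle coming from the noncommutativity of $\HH$, a Weyl--Heisenberg system; acting by left multiplication on the fibre $HS(L^2(\RR))\widehat P_\lambda\cong\bigoplus_{j=1}^{m_\mathcal{H}(\lambda)}L^2(\RR)$ it acts diagonally, so the fibrewise requirement is that the orbit of a single vector $\widehat\phi(\lambda)$ be a tight frame of the $m_\mathcal{H}(\lambda)$--fold vector--valued space, i.e.\ a Gabor superframe. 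By the density theory of vector--valued Gabor frames this is solvable at $\lambda$ if and only if, summed over the two surviving lifts, the dimension density $m_\mathcal{H}(2\pi\lambda)|2\pi\lambda|+m_\mathcal{H}(2\pi\lambda-\tfrac1{r(\Gamma)})|2\pi\lambda-\tfrac1{r(\Gamma)}|$ does not exceed the co--density $\tfrac1{d(\Gamma)r(\Gamma)}$ of $\Gamma$ in $\HH$, which is \eqref{inequality-for-multiplicity}; in particular $m_\mathcal{H}\ge1$ on its support pushes that support into $[-\tfrac1{d(\Gamma)r(\Gamma)},\tfrac1{d(\Gamma)r(\Gamma)}]$ up to a null set, so $\mathcal{H}$ is band-limited and, by a counting argument, never has more than two nonvanishing lifts — retroactively validating the two--term reduction. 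For the ``if'' direction one then makes a measurable selection $\lambda\mapsto\widehat\phi(\lambda)$ of admissible fibres, checks $\int_{\RR^\ast}\|\widehat\phi(\lambda)\|_{H.S}^2\,d\mu(\lambda)<\infty$, and invokes surjectivity of the Plancherel transform to recover $\phi\in\mathcal{H}$; the ``only if'' direction is the fibrewise necessity just described.

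The hard part will be the fibre analysis of that last step: proving that the tight--frame problem for a Weyl--Heisenberg (projective) system acting diagonally on several copies of $L^2(\RR)$ — and simultaneously over two distinct time--frequency lattices — has \emph{no} obstruction beyond the dimension count, both that the count is forced and that, once it holds, a single generator genuinely exists. Here I would pass, fibrewise, to the Zak transform of $L^2(\RR)$ relative to the translation lattice $d\ZZ$, under which the modulation becomes a multiplier and the whole system becomes a shift--and--multiplication system of definite multiplicity, reducing the question to the existence of tight Gabor superframes at the given redundancy — available exactly when the redundancy is at least the number of copies. The remaining technical point is the measurable selection in $\lambda$ compatible with the $L^2$--integrability constraint: the set of admissible generators is a nonempty, closed--valued measurable field in the fibres, so the Kuratowski--Ryll-Nardzewski selection theorem applies, and a suitable normalization makes the selected field square--integrable. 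Assembling these ingredients yields both implications of the theorem.
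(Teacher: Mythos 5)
The paper contains no proof of this statement. Theorem \ref{fuehrstheorem} is quoted from \cite{Fuehr05} (it is Theorem 6.4 there, modulo the factor $2\pi$ caused by a different realization of the Schr\"odinger representations, as the paper itself notes immediately after the theorem). So there is no in-paper argument to compare yours against; your proposal can only be measured against F\"uhr's original proof and against internal coherence.

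Judged on those terms, your outline correctly identifies the strategy that is actually used for results of this type: reduce $\Gamma$ to the normal form $\Gamma_d$ by an automorphism, pass to the Plancherel transform, periodize over the central lattice using $\rho_\lambda(0,0,l)=e^{i\lambda l}\,\mathrm{Id}$, and turn the tight-frame condition into a fibrewise Weyl--Heisenberg density count via the Zak transform. But as written it is a program, not a proof, for two reasons. First, the periodization over the centre produces a sum over \emph{all} lifts $2\pi\lambda-k/r(\Gamma)$, $k\in\ZZ$, not two; the two-term form (\ref{inequality-for-multiplicity}) is legitimate only after band-limitedness has been extracted from the full sum (the weights $\left|2\pi\lambda-k/r(\Gamma)\right|$ grow in $k$, forcing the support of $m_\mathcal{H}$ into $\left[-\frac{1}{d(\Gamma)r(\Gamma)},\frac{1}{d(\Gamma)r(\Gamma)}\right]$, an interval meeting at most two cosets of $\frac{1}{r(\Gamma)}\ZZ$ a.e.). You acknowledge this and call it ``retroactive validation,'' but in the order you present the argument it is circular: the necessity direction must be run on the full periodized sum first. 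Second, and more importantly, the step you defer to ``the density theory of vector-valued Gabor frames'' --- that for a single generator acting diagonally on $m_\mathcal{H}$ copies of $L^2(\RR)$ in the surviving fibres \emph{simultaneously}, with a projective cocycle and a $\lambda$-dependent lattice, the dimension count is both necessary and sufficient for a tight frame, together with a measurable, square-integrable selection of generators --- is not a known black box one can cite here; it is essentially the content of the theorem itself, and it is what the proof in \cite{Fuehr05} actually establishes. Invoking it reduces your argument to a restatement of the statement being proved.
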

 
    Note that Theorem 6.4 in \cite{Fuehr05} refers to a different realization of t\-he
\-Sc\-hr\"odin\-ge\-r
    representations,
    hence we have the additional factor $2\pi$ in the relation
$(\ref{inequality-for-multiplicity})$.\\
  
    Theorem   \ref{lattice}  enables us to show the existence of a function $\phi$ in
$V_0$
which provides a tight frame for $V_0$. Therefore as a consequence we have  our next main result in this section:
    \begin{theorem}\label{existence_of_scaling_function}
    There exists a normalized tight frame   of the form
$\{L_\gamma\phi\}_{\gamma\in \Gamma}$ for an appropriate $\phi\in V_0$ and a suitable lattice
$\Gamma$ in
 $\HH$.
    \end{theorem}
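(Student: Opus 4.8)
The plan is to read the statement directly off Theorem~\ref{fuehrstheorem}: since $V_0=L^2(\HH)\ast S$ is a closed left-invariant subspace of $L^2(\HH)$, it carries a normalized tight frame of the form $\{L_\gamma\phi\}_{\gamma\in\Gamma}$ for some $\phi\in V_0$ exactly when its multiplicity function $m_{V_0}$ satisfies the inequality~(\ref{inequality-for-multiplicity}) attached to $\Gamma$. So the whole proof reduces to (i) computing $m_{V_0}$ and (ii) producing a lattice $\Gamma$ for which~(\ref{inequality-for-multiplicity}) actually closes.

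For step (i) I would note that the orthogonal projection of $L^2(\HH)$ onto $V_0$ is $P_0f=f\ast S$, so $\widehat{P_0f}(\lambda)=\widehat f(\lambda)\widehat S(\lambda)$ by property~(ii) of the Fourier transform; since $\widehat S(\lambda)$ is itself a projection (property~(iii) in the proof of Theorem~\ref{sinc-type-function}), the uniqueness clause in Definition~\ref{multiplicity-function} forces $\widehat{P_0}_\lambda=\widehat S(\lambda)$ and hence $m_{V_0}(\lambda)=\operatorname{rank}\widehat S(\lambda)$. Reading off the construction, this equals $2^{2k}+1$ on each set $I_0^k$ and $0$ off $\mathcal I$, so $V_0$ is band-limited. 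The one point that really matters is that, although $m_{V_0}$ is unbounded near $0$, the product $m_{V_0}(\lambda)|\lambda|$ stays uniformly bounded --- this is precisely what the dyadic design of $S$ was built to deliver: on $I_0^k$ one has $|\lambda|\le\pi/(2^{2k+1}d)$, hence $m_{V_0}(\lambda)|\lambda|\le(2^{2k}+1)\,\pi/(2^{2k+1}d)=\tfrac{\pi}{2d}(1+2^{-2k})\le\pi/d$ for every $k$.

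For step (ii) I would deliberately \emph{not} take the lattice $\Gamma_d$ of~(\ref{lattice}): there $d(\Gamma_d)=d$, $r(\Gamma_d)=1$, so the right-hand side of~(\ref{inequality-for-multiplicity}) is only $1/d$, which is too small against the bound $\pi/d$ from step (i). Instead I would enlarge the right-hand side by passing to an automorphic image of $\Gamma_1$, keeping $d(\Gamma)=1$ while shrinking $r(\Gamma)$: put $\Gamma:=\alpha(\Gamma_1)$ for $\alpha(p,q,t)=(p,\tfrac{d}{2\pi}q,\tfrac{d}{2\pi}t)\in\mathrm{Aut}(\HH)$, so that $d(\Gamma)=1$, $r(\Gamma)=\tfrac{d}{2\pi}$ and $1/(d(\Gamma)r(\Gamma))=2\pi/d$. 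In~(\ref{inequality-for-multiplicity}) the first term is supported where $2\pi\lambda\in\mathcal I\subseteq[-\tfrac{\pi}{2d},\tfrac{\pi}{2d}]$ and the second where $2\pi\lambda\in\tfrac{2\pi}{d}+\mathcal I$; since $\tfrac{2\pi}{d}-\tfrac{\pi}{2d}=\tfrac{3\pi}{2d}>\tfrac{\pi}{2d}$, these two supports are disjoint, so at almost every $\lambda$ at most one term contributes, and by step (i) each contributing term is $\le\pi/d<2\pi/d$. Thus~(\ref{inequality-for-multiplicity}) holds $m_{V_0}$-a.e., and Theorem~\ref{fuehrstheorem} supplies the desired $\phi\in V_0$ together with the normalized tight frame $\{L_\gamma\phi\}_{\gamma\in\Gamma}$, which then serves as the scaling function of the FMRA.

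The Fourier-side bookkeeping is routine; the genuinely delicate point --- and the step I expect to be the main obstacle --- is making the two parameters cooperate. The obvious candidate $\Gamma_d$ misses by a factor of $\pi$, so one must recognise that the band-limit of $S$ is fixed and that the only free handle is the lattice geometry: shrink $r(\Gamma)$ via an automorphism while keeping $d(\Gamma)=1$, and throughout keep careful track of the factor $2\pi$ that separates the Schr\"odinger normalisation used here from the one in \cite{Fuehr05} responsible for the extra $2\pi$ appearing in~(\ref{inequality-for-multiplicity}).
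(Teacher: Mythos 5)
Your proposal is correct and follows essentially the same route as the paper: compute $m_{V_0}=\operatorname{rank}\widehat S(\cdot)$, choose a lattice with $1/(d(\Gamma)r(\Gamma))=2\pi/d$ (the paper takes $d(\Gamma)=d$, $r(\Gamma)=1/2\pi$ rather than your $d(\Gamma)=1$, $r(\Gamma)=d/2\pi$, but the threshold is identical), and invoke Theorem~\ref{fuehrstheorem}. In fact you supply the two details the paper dismisses as ``easily proved'' --- the uniform bound $m_{V_0}(\lambda)|\lambda|\le\pi/d$ coming from the dyadic design of $\widehat S$, and the disjointness of the supports of the two terms in~(\ref{inequality-for-multiplicity}) --- so your write-up is, if anything, more complete.
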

  \begin{proof}
    For our purpose we pick  a lattice with $r(\Gamma)= \frac{1}{2\pi}$ and $d(\Gamma)= d$.
(Observe that it is possible due to Theorem 6.2 in \cite{Fuehr05}  to select a lattice with the
desired
associated numbers $r$ and $d$.)  From the definition of $V_0$,
$\{\widehat{S}(\lambda)\}_{\lambda\in\RR^\ast}$ is  the associated projection field of $V_0$
 with the multiplicity
function $m_{V_0}$ which is  given by
\begin{align}\notag
   m_{V_0}(2\pi\lambda)= rank(\widehat S({2\pi\lambda}))=
   \begin{cases}
         2^{2k}+1\;&
   \text{if} \;\;
     2\pi\lambda \in I_0^k\;\; \text{for some}~k\in\NN_0\\
0\;& \;\; \text{elsewhere}.
\end{cases}
\end{align}
     One can easily prove that the inequality in (\ref{inequality-for-multiplicity}) holds
for $m_{V_0}$. By the construction of $S$ in Theorem \ref{main-theorem-of-frames},
     $\hat S(\lambda)=0$ for any $\left|\lambda\right| > \frac{\pi}{2d}$  which implies:
     \begin{align}\notag
     \Sigma (m_{V_0})\subset
     \left[-\frac{\pi}{2d} \; , \; \frac{\pi}{2d}  \right]\subset
       \left[-\frac{2\pi}{d} \;,\; \frac{2\pi}{d}  \right]= \left[-\frac{1}{d(\Gamma)r(\Gamma)} \; ,\;
\frac{1}{d(\Gamma)r(\Gamma)}  \right].
     \end{align}
    Therefore all the conditions  of Theorem \ref{fuehrstheorem} hold for $V_0$. Hence there
exists
a function $\phi$, so-called scaling function,  such that
    for our selected lattice $\Gamma$, $L_{\Gamma}\phi$ forms a  normalized tight frame for $V_0$.
From this, property $6$ of Definition \ref{def:MRA-H} is satisfied.
\end{proof}
 
    \begin{corollary}\label{a-trivial-lemma}
   For any $j\in\ZZ$,   $\{L_{2^{-j}\gamma}D_{2^{-j}}\phi\}_\gamma$ constitutes  a normalized
tight frame
of  $V_j$.
    \end{corollary}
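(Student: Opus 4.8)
The plan is to obtain Corollary \ref{a-trivial-lemma} as an immediate consequence of Theorem \ref{existence_of_scaling_function}: one transports the normalized tight frame of $V_0$ onto $V_j$ by means of the unitary dilation operator, in complete analogy with the way one passes from $W_0$ to $W_j$ in the classical Shannon analysis on $L^2(\RR)$ recalled above.

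The argument rests on three facts. First, by Theorem \ref{existence_of_scaling_function} there is a function $\phi\in V_0$ with $\|g\|_2^2=\sum_{\gamma\in\Gamma}|\langle g,L_\gamma\phi\rangle|^2$ for all $g\in V_0$. Second, $D_{2^{-j}}$ is a unitary operator on $L^2(\HH)$, and property $4$ of Definition \ref{def:MRA-H}, read through equation (\ref{equ}), says precisely that it intertwines the closed subspaces $V_0$ and $V_j$; since any unitary operator carries a normalized tight frame of a closed subspace onto a normalized tight frame of its image, applying the appropriate power of $D_2$ to $\{L_\gamma\phi\}_{\gamma\in\Gamma}$ produces a normalized tight frame of $V_j$. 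Third, since each $a>0$ acts on $\HH$ as a group automorphism, a one-line computation from the defining formulas of $D_a$ and $L_\omega$ yields the commutation relation $D_aL_\omega=L_{a^{-1}\omega}D_a$ for every $\omega\in\HH$; applied with $a=2^{\pm j}$ this turns the dilate of $L_\gamma\phi$ into a left translate, along the rescaled lattice $2^{-j}\Gamma$, of the dilate of $\phi$.

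Putting these together, the normalized tight frame of $V_j$ so obtained is exactly $\{L_{2^{-j}\gamma}D_{2^{-j}}\phi\}_{\gamma\in\Gamma}$; these functions lie in $V_j$ because $V_j$ is left shift-invariant under $2^{-j}\Gamma$ by property $5$ of Definition \ref{def:MRA-H}. That is the entire proof, which is why the statement is only a corollary. I do not anticipate any genuine obstacle: the only steps deserving an explicit line are the verification of the commutation relation $D_aL_\omega=L_{a^{-1}\omega}D_a$ and the identification of $V_0$ with $V_j$ under $D_{2^{-j}}$ coming from the scaling axiom.
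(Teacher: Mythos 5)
Your proposal is correct and is essentially the argument the paper intends: the corollary is stated there without proof, but Remark (b) following Definition \ref{def:MRA-H} and the computation inside the proof of Lemma \ref{for-whole} rest on exactly the three ingredients you isolate (the $V_0$-frame from Theorem \ref{existence_of_scaling_function}, the unitary dilation intertwining $V_0$ with $V_j$ via equation (\ref{equ}), and the commutation relation $D_aL_\omega=L_{a^{-1}\omega}D_a$). Your hedge ``the appropriate power of $D_2$'' is prudent, since with the paper's stated normalization $D_af=a^{2}f(a\cdot)$ the unitary carrying $V_0$ onto $V_j$ is $D_{2^{j}}$ rather than $D_{2^{-j}}$ --- a sign convention the paper itself is inconsistent about --- but this does not affect the substance of your argument.
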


    As already mentioned, we have constructed our
 the MRA for $L^2(\HH)$, with the aim of finding an associated discrete
  wavelet
system in $L^2(\HH)$.
  More precisely,   we want to
 construct  a discrete wavelet system for $L^2(\HH)$ which is a normalized tight frame.  We will
study this in detail in the next section by  considering a ``scaling function''  $\phi$ in $V_0$.

\subsection{Existence of  Normalized Tight Wavelet Frame for the  Hei\-sen\-berg group}
\label{sec:ExistanceOfNormalizedShannonTightFrameForHeisenbergGroup}
 It is natural to try to obtain one normalized tight frame (n.t frame)  for $L^2(\HH)$ by
combining all the n.t frames $\{L_{2^{-j}\gamma}D_{2^{-j}}\phi\}_{\gamma\in \Gamma}$ of $V_j$'s. But
although $V_j\subseteq V_{j+1}$, the n.t frame for $V_j$ is not necessarily contained in the n.t frame
$\{L_{2^{-(j+1)}\gamma}D_{2^{-(j+1)}}\phi\}_{\gamma\in \Gamma}$ of $V_{j+1}$. Therefore 
the union
of all n.t frames for $V_j$'s does not necessarily constitute a n.t frame for $L^2(\HH)$.\\
To find an n.t frame for $L^2(\HH)$, we use the following standard approach. For every $j\in \ZZ$,
use $W_j$ to denote the orthogonal complement of $V_j$ in $V_{j+1}$, i.e., $V_{j+1}= V_j\oplus
W_j$, where the symbol $\oplus$ stands for orthogonal closed subspace. Suppose $Q_j$ denotes the
 orthogonal projection of $L^2(\HH)$ onto $W_j$. Then $P_{j+1}= P_j+Q_j$ and evidently:
\begin{align}\notag
V_j= \bigoplus_{k\leq j-1}W_k.
\end{align}

 The most important thing remaining unchanged is that, the spaces $W_j, \;j\in \ZZ$, retain
the scaling property from $V_j$. More precisely, 
 \begin{align}\label{scallingproperty}
 f\in W_j\Longleftrightarrow f(2^{k-j}.)\in W_k.
 \end{align}
Consequently we obtain the following    orthogonal decomposition:
\begin{align}\label{eq:directsum}
L^2(\HH)= \bigoplus_{j\in \ZZ }W_j.
\end{align}
From this decomposition of $L^2(\HH)$, it
  follows that each $f\in L^2(\HH)$ has a representation $f=\sum_j Q_jf$, where $Q_jf   \bot Q_kf$
for any  pair  of $j,k$, $j\not=k$.
 \vspace{.1in}

 Our goal is reduced to finding a n.t frame for $W_0$. If we can find such a n.t frame for
$W_0$, then by the scaling property  (\ref{scallingproperty}) and orthogonal decomposition of
$L^2(\HH)$ into the $W_j$'s in (\ref{eq:directsum}), we can easily get a n.t frame for space
$L^2(\HH)$. We explain this in detail, in the next Lemma:
 \begin{lemma}\label{for-whole}
 Suppose $\psi\in W_0$ and $\Gamma$ is a lattice in $\HH$ such that
 $\{L_\gamma\psi\}_{\gamma\in \Gamma}$ constitutes a n.t    frame of $W_0$.  Then the wavelet system
$\{L_{2^{-j}\gamma}D_{2^{-j}}\psi\}_{\gamma,j}$ is a n.t  frame of $L^2(\HH)$.
 \end{lemma}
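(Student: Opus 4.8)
The plan is to exploit the two structural facts that survive the passage from $\{V_j\}$ to $\{W_j\}$: the scaling property (\ref{scallingproperty}) and the orthogonal decomposition (\ref{eq:directsum}). First I would translate the hypothesis on $\psi$ into a statement about the projection $Q_0$ onto $W_0$: saying that $\{L_\gamma\psi\}_{\gamma\in\Gamma}$ is a normalized tight frame of $W_0$ means precisely that $\|Q_0 f\|_2^2 = \sum_{\gamma\in\Gamma}|\langle f, L_\gamma\psi\rangle|^2$ for every $f\in L^2(\HH)$, since $Q_0 f$ is the component of $f$ in $W_0$ and $\langle f, L_\gamma\psi\rangle = \langle Q_0 f, L_\gamma\psi\rangle$ as $L_\gamma\psi\in W_0$.

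Next I would push this identity down to each $W_j$ using the dilation. For fixed $j$, apply $D_{2^{-j}}$, which is unitary on $L^2(\HH)$ and, by (\ref{scallingproperty}), carries $W_0$ onto $W_j$. Hence $\{L_{2^{-j}\gamma}D_{2^{-j}}\psi\}_{\gamma\in\Gamma} = \{D_{2^{-j}}L_\gamma\psi\}_{\gamma\in\Gamma}$ (here one uses the intertwining relation $D_{2^{-j}}L_\gamma = L_{2^{-j}\gamma}D_{2^{-j}}$, which follows from the definitions of the dilation and translation operators and the fact that $2^{-j}$ is a group automorphism) is a normalized tight frame of $W_j$. In projection terms, writing $\psi_{j,\gamma} := L_{2^{-j}\gamma}D_{2^{-j}}\psi$ and $Q_j$ for the orthogonal projection onto $W_j$, unitarity of $D_{2^{-j}}$ gives
\begin{align}\notag
\sum_{\gamma\in\Gamma}|\langle f, \psi_{j,\gamma}\rangle|^2 = \sum_{\gamma\in\Gamma}|\langle D_{2^j}f, L_\gamma\psi\rangle|^2 = \|Q_0 D_{2^j}f\|_2^2 = \|Q_j f\|_2^2,
\end{align}
where the last equality uses $Q_j = D_{2^{-j}} Q_0 D_{2^j}$, again a consequence of (\ref{scallingproperty}).

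Finally I would sum over $j\in\ZZ$. Since $L^2(\HH) = \bigoplus_{j\in\ZZ} W_j$ is an orthogonal decomposition, Parseval for this orthogonal sum gives $\|f\|_2^2 = \sum_{j\in\ZZ}\|Q_j f\|_2^2$ for every $f\in L^2(\HH)$. Combining with the per-level identity yields
\begin{align}\notag
\sum_{j\in\ZZ}\sum_{\gamma\in\Gamma}|\langle f, \psi_{j,\gamma}\rangle|^2 = \sum_{j\in\ZZ}\|Q_j f\|_2^2 = \|f\|_2^2,
\end{align}
which is exactly the normalized tight frame condition for $\{L_{2^{-j}\gamma}D_{2^{-j}}\psi\}_{j,\gamma}$ in $L^2(\HH)$. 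I do not expect a serious obstacle here; the only points requiring a little care are verifying the intertwining relation $D_{2^{-j}}L_\gamma = L_{2^{-j}\gamma}D_{2^{-j}}$ and confirming that $D_{2^{-j}}$ maps $W_0$ onto $W_j$ (rather than merely into it), which follows from applying the equivalence in (\ref{scallingproperty}) in both directions together with $V_{j+1} = V_j \oplus W_j$ and property 4 of Definition \ref{def:MRA-H}. The interchange of the two sums is unproblematic since all terms are nonnegative.
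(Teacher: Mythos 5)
Your argument is essentially the paper's own proof: both reduce the claim to the per-level identity $\sum_{\gamma}|\langle f,L_{2^{-j}\gamma}D_{2^{-j}}\psi\rangle|^2=\|Q_jf\|^2$ via unitarity of the dilation and the scaling property (\ref{scallingproperty}), and then sum over $j$ using the orthogonal decomposition (\ref{eq:directsum}). Your write-up is if anything slightly more explicit (stating $Q_j=D_{2^{-j}}Q_0D_{2^j}$ and the replacement $\langle f,\psi_{j,\gamma}\rangle=\langle Q_jf,\psi_{j,\gamma}\rangle$, which the paper uses tacitly in its last line), and the intertwining convention $D_{2^{-j}}L_\gamma=L_{2^{-j}\gamma}D_{2^{-j}}$ you invoke is the same one the paper itself adopts.
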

  \begin{proof}
 Observe that this lemma is a consequence of the orthogonal decomposition of $L^2(\HH)$ into the $W_j$'s.
Suppose $f\in L^2(\HH)$. From (\ref{eq:directsum}), $f$ can be written as
$
 f=\sum_j Q_j(f).
$
 Therefore to  prove that the system
 $\{L_{2^{-j}\gamma}D_{2^{-j}}\psi\}_{\gamma,j}$ forms a n.t frame of
 $L^2(\HH)$, it is sufficient to show that for any $j$ the system
 $\{L_{2^{-j}\gamma}D_{2^{-j}}\psi\}_{\gamma}$ is a n.t frame of $W_j$. From the scaling property
of the spaces $W_j$ (\ref{scallingproperty}) we have $Q_j(f)(2^{-j}.)\in W_0$. Take $Q_j(f)=f_j$.
From the hypothesis of the Lemma one has
\begin{align}\notag
\|f_j(2^{-j}.)\|^2=\sum_{\gamma\in \Gamma} \left| \langle f_j(2^{-j}.), L_\gamma\psi\rangle\right|^2.
\end{align}
 Replacing $2^{2j}D_{2^j}f_j(\cdot)= f_j(2^{-j}\cdot)$ in the above  we obtain
  \begin{align}\label{some-equality}
 \| f_j\|^2= \|D_{2^j}f_j\|^2 =\sum_{\gamma\in \Gamma} \left|\langle D_{2^j}f_j, L_\gamma\psi\rangle\right|^2
 = \sum_{\gamma\in \Gamma} \left|\langle f_j, L_{2^{-j}\gamma }D_{2^{-j}}\psi\rangle\right|^2.
  \end{align}
  Summing over $j$ in (\ref{some-equality}) yields:
     \begin{align}\notag
  \|f\|^2= \underset{j\in \ZZ}{\sum}\|f_j\|^2=
  \underset{j,\gamma}{\sum} \left|\langle f_j, L_{2^{-j}\gamma
   }D_{2^{-j}}\psi\rangle\right|^2
   =\underset{j,\gamma}{\sum} \left|\langle f, L_{2^{-j}\gamma
   }D_{2^{-j}}\psi\rangle\right|^2,
   \end{align}
 as desired.
 \end{proof}
 
 By  Lemma \ref{for-whole} it remains to show
that the space $W_0$ contains a function $\psi$ generating a normalized tight frame of $W_0$.
  \begin{remark}
  By  the definition of  orthogonal projections $P_1, P_0$ in  $(\ref{definition-of-Projections}),$
for any
  $ f\in L^2(\HH)$
   we have
  \begin{align}\notag
  Q_0(f)= P_1(f)-P_0(f)
  =f\ast \left[(2^4S(2.))-S \right],
  \end{align}
which implies that 
\begin{align}\label{about-W}
  W_0= L^2(\HH)\ast \left[(2^4  S(2.) )-S \right].
  \end{align}
   Likewise for any $j$ one can see that
     \begin{align}\notag
     W_j&= L^2(\HH)\ast \left[(2^{4j}S(2^j.))-(2^{4(j-1)}S(2^{j-1}.))
\right]\quad \text{and},\\\notag
    Q_j(f)&= f\ast \left[(2^{4j}S(2^j.))-(2^{4(j-1)}S(2^{j-1})
\right]\quad \forall \;f\in L^2(\HH),
    \end{align}
    where $Q_j$, as earlier mentioned,  is the projection operator of $L^2(\HH)$ onto $W_j$.
\end{remark}
    The  representation of the space $W_0$ in (\ref{about-W}) suggests that we can get a n.t. frame for
$W_0$  by applying  Theorem \ref{fuehrstheorem}. We obtain this in the proof of the next theorem,
which is  the last main result of this work:

    \begin{theorem}\label{prove-of-wavelet}
    There exists a band-limited function $\psi\in L^2(\HH)$ and a lattice
$\Gamma$
in  $\HH$ such that
   the discrete wavelet system $\{L_{2^{-j}\gamma}D_{2^{-j}}\psi\}_{j,\gamma}$ forms a n.t. frame
of
$L^2(\HH)$.
    \end{theorem}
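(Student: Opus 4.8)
The plan is to reduce, via Lemma~\ref{for-whole}, the whole statement to a single task: produce a function $\psi\in W_0$ and a lattice $\Gamma$ in $\HH$ with $\{L_\gamma\psi\}_{\gamma\in\Gamma}$ a normalized tight frame of $W_0$. Once such a $\psi$ is at hand, Lemma~\ref{for-whole} promotes $\{L_\gamma\psi\}_\gamma$ to the wavelet frame $\{L_{2^{-j}\gamma}D_{2^{-j}}\psi\}_{j,\gamma}$ of $L^2(\HH)$, and the band-limitedness of $\psi$ comes for free from the fact (proved along the way) that $W_0$ is a band-limited left-invariant subspace in the sense of Definition~\ref{multiplicity-function}, so that $\widehat\psi(\lambda)=0$ off a fixed bounded set. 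To get $\psi$ I would invoke Theorem~\ref{fuehrstheorem} with $\mathcal{H}=W_0$; the entire argument then reduces to computing the multiplicity function $m_{W_0}$ and verifying inequality~(\ref{inequality-for-multiplicity}) for a suitable lattice.

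For the multiplicity function I would use the representation $W_0=L^2(\HH)\ast\bigl[(2^4 S(2\cdot))-S\bigr]$ from~(\ref{about-W}): it shows the projection field of $W_0$ is $\widehat{Q_0}_\lambda=\widehat{S_1}(\lambda)-\widehat S(\lambda)$, with $S_1=2^4 S(2\cdot)$ and, by~(\ref{S-j}), $\widehat{S_1}(\lambda)=D_{2^{-1}}\widehat S(2^{-2}\lambda)D_2$ unitarily equivalent to $\widehat S(2^{-2}\lambda)$. Property~$(a)$ of Theorem~\ref{sinc-type-function}, $S\ast S_1=S$, reads on the Fourier side as $\widehat S(\lambda)\widehat{S_1}(\lambda)=\widehat S(\lambda)$, which forces $\operatorname{range}\widehat S(\lambda)\subseteq\operatorname{range}\widehat{S_1}(\lambda)$ (both being self-adjoint projections), so $\widehat{Q_0}_\lambda$ really is a self-adjoint projection and
\[
m_{W_0}(\lambda)=\operatorname{rank}\widehat{S_1}(\lambda)-\operatorname{rank}\widehat S(\lambda)=\operatorname{rank}\widehat S(2^{-2}\lambda)-\operatorname{rank}\widehat S(\lambda).
\]
Substituting the piecewise-constant rank of $\widehat S$ from Theorem~\ref{sinc-type-function} ($2^{2k}+1$ on $I_0^k$, $0$ for $|\lambda|>\tfrac{\pi}{2d}$) and tracking how $\lambda\mapsto 2^{-2}\lambda$ shifts the annuli $I_0^k$, one finds $m_{W_0}=3\cdot 2^{2k}$ on $I_0^k$ for each $k\ge0$, $m_{W_0}=2$ on $\{\tfrac{\pi}{2d}<|\lambda|\le\tfrac{2\pi}{d}\}$, and $m_{W_0}=0$ for $|\lambda|>\tfrac{2\pi}{d}$; in particular $\Sigma(m_{W_0})\subseteq[-\tfrac{2\pi}{d},\tfrac{2\pi}{d}]$ and $W_0$ is band-limited.

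The estimate that drives everything is $\sup_\lambda m_{W_0}(\lambda)\,|\lambda|=\tfrac{4\pi}{d}$, the value being approached near $|\lambda|=\tfrac{2\pi}{d}$ (on each $I_0^k$ one only gets $m_{W_0}(\lambda)|\lambda|\le 3\cdot 2^{2k}\cdot\tfrac{\pi}{2^{2k+1}d}=\tfrac{3\pi}{2d}$, which is smaller). I would then choose, again using Theorem~6.2 of~\cite{Fuehr05}, a lattice $\Gamma$ with $r(\Gamma)=\tfrac{d}{8\pi}$ and $d(\Gamma)=1$ (any lattice with $\tfrac1{r(\Gamma)}>\tfrac{4\pi}{d}$ and $\tfrac1{d(\Gamma)r(\Gamma)}\ge\tfrac{4\pi}{d}$ would do). The condition $\tfrac1{r(\Gamma)}>\tfrac{4\pi}{d}$ makes the subsets of $\RR$ on which $m_{W_0}(\mu)|\mu|$ and $m_{W_0}\!\bigl(\mu-\tfrac1{r(\Gamma)}\bigr)\bigl|\mu-\tfrac1{r(\Gamma)}\bigr|$ can be nonzero disjoint, so for a.e.\ $\mu=2\pi\lambda$ at most one summand in~(\ref{inequality-for-multiplicity}) is nonzero and that one is $\le\tfrac{4\pi}{d}\le\tfrac1{d(\Gamma)r(\Gamma)}$; since also $\Sigma(m_{W_0})\subseteq[-\tfrac{2\pi}{d},\tfrac{2\pi}{d}]\subseteq\bigl[-\tfrac1{d(\Gamma)r(\Gamma)},\tfrac1{d(\Gamma)r(\Gamma)}\bigr]$, all the hypotheses of Theorem~\ref{fuehrstheorem} hold. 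That theorem supplies $\psi\in W_0$ with $\{L_\gamma\psi\}_{\gamma\in\Gamma}$ a normalized tight frame of $W_0$; because $W_0$ is band-limited, $\widehat\psi$ vanishes off $\Sigma(m_{W_0})$, so $\psi$ is $\mathcal{I}_1$-band-limited. Lemma~\ref{for-whole} then delivers the normalized tight wavelet frame $\{L_{2^{-j}\gamma}D_{2^{-j}}\psi\}_{j,\gamma}$ of $L^2(\HH)$.

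The step needing the most care is the multiplicity computation together with the inequality: $m_{W_0}$ is \emph{unbounded} --- it grows like $|\lambda|^{-1}$ near the origin, the annuli $I_0^k$ shrinking geometrically while their ranks grow geometrically --- so one must check that $m_{W_0}(\lambda)|\lambda|$ is bounded and pin down its exact supremum. Moreover, unlike the scaling-function case in Theorem~\ref{existence_of_scaling_function}, one is \emph{forced} to pass to a strictly finer lattice than the one used for $V_0$: already the outermost band $\{\tfrac{\pi}{2d}<|\lambda|\le\tfrac{2\pi}{d}\}$ of $\Sigma(m_{W_0})$ pushes $m_{W_0}(\lambda)|\lambda|$ up to $\tfrac{4\pi}{d}$, twice the bound $\tfrac{2\pi}{d}$ available for the $V_0$-lattice, so~(\ref{inequality-for-multiplicity}) would simply fail there. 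Everything else --- that $W_0=V_1\ominus V_0$ is closed and left-invariant, that the Fourier-side identities transfer back through the Plancherel and convolution theorems, and the final passage through Lemma~\ref{for-whole} --- is routine.
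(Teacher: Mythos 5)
Your proposal is correct, but it takes a genuinely different route from the paper at the decisive step. Both arguments reduce the theorem, via Lemma~\ref{for-whole}, to producing a normalized tight frame $\{L_\gamma\psi\}_{\gamma}$ of $W_0$. The paper gets this almost for free: it takes the normalized tight frame $\{L_{2^{-1}\gamma}D_{2^{-1}}\phi\}_{\gamma\in\Gamma}$ of $V_1$ from Corollary~\ref{a-trivial-lemma} and applies the orthogonal projection $Q_0$ onto $W_0$, using that $Q_0$ commutes with left translations and that the image of a normalized tight frame under a self-adjoint projection is a normalized tight frame of the range; the wavelet is $\psi=Q_0(D_{2^{-1}}\phi)$ and the lattice is $2^{-1}\Gamma$ (one must relabel the lattice before invoking Lemma~\ref{for-whole}, a point the paper glosses over). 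You instead apply Theorem~\ref{fuehrstheorem} directly to $\mathcal{H}=W_0$, which forces you to compute $m_{W_0}$ from the projection field $\widehat{S_1}(\lambda)-\widehat S(\lambda)$ and to verify inequality~(\ref{inequality-for-multiplicity}) for a finer lattice. Your computation ($m_{W_0}=3\cdot 2^{2k}$ on $I_0^k$, $m_{W_0}=2$ on the outer band, $\sup_\lambda m_{W_0}(\lambda)\left|\lambda\right|=4\pi/d$) is consistent with the paper's formulas for $\widehat S$ and $\widehat{S_1}$, and your observation that the $V_0$-lattice with $1/(d(\Gamma)r(\Gamma))=2\pi/d$ would make~(\ref{inequality-for-multiplicity}) fail on $W_0$ is a genuine quantitative point the paper never makes explicit --- it is precisely why the paper's construction ends up attached to the finer lattice $2^{-1}\Gamma$. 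The paper's route is shorter and sidesteps the multiplicity computation entirely; yours is more self-contained (it does not need the fact about projections of Parseval frames) and identifies exactly which lattices work. One small correction in your favor: the paper's stated support bound $[-\pi/d,\pi/d]$ for the Fourier support of $(2^4S(2\cdot))-S$ appears to be a slip, since by Theorem~\ref{sinc-type-function} the function $2^4S(2\cdot)$ is $\mathcal{I}_1$-band-limited and the correct bound is your $[-2\pi/d,2\pi/d]$.
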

\begin{proof}
In order to proof the theorem, first we shall show that the space $W_0$ is band-limited and contains a function  such that its left translations under a suitable lattice $\Gamma$ forms a n.t. frame of $W_0$. Hence, the assertion of the theorem will follow  from Lemma \ref{for-whole} and Theorem \ref{prove-of-wavelet}. \\
   Due to the support of $S$, we have
  $
   \large\sum\left[(2^4S(2.))-S \right]\subset \left[-\frac{\pi}{d}\; ,\; \frac{\pi}{d}
\right],
$
   where $\Sigma$ stands for the support 
   on the
Fourier transform side, and is applied for 
    the  function  $(2^4S(2.))-S$.  Hence $W_0$ is band-limited. To prove  that the space $W_0$ contains a n.t. frame,
observe that
 by Corollary \ref{a-trivial-lemma} the set $\{ L_{2^{-1}\gamma}D_{2^{-1}}\phi\}_{\gamma\in \Gamma}$
is
 a n.t frame of $V_1$ for a suitable $\Gamma$.
 On the other hand, the projection of $V_1$ onto $W_0$, $Q_0$, is left invariant and hence
 for any $\gamma\in \Gamma$, we have $Q_0(L_{2^{-1}\gamma}D_{2^{-1}}\phi)=
   L_{2^{-1}\gamma}\big(Q_0(D_{2^{-1}}\phi)\big)$. Since the image of a n.t. frame  under a left
shift-invariant
   projection is again a n.t. frame of the image space, 
  the set   $\{Q_0\big(L_{2^{-1}\gamma}D_{2^{-1}}\phi\big)\}_\gamma=\{
  L_{2^{-1}\gamma}\big(Q_0(D_{2^{-1}}\phi\big)
  \}_\gamma$ constitutes  a n.t. frame for $W_0$, as desired.
  \end{proof}
    We conclude our  work with the following remark:
  \begin{remark}
 As mentioned earlier, in contrast to the case of  $\RR$, in the present work it is  not required that the
wavelet
function  $\psi$ contained in $W_0$ be constructed through the so-called scaling function $\phi$ in $V_0$.
   \end{remark}

{\bf Acknowledgment}~  The author is grateful to Hartmut F\"uhr for his helpful    
 discussions and also grateful to Daryl Geller and Hans G. Feichtinger for their useful comments.


\begin{thebibliography}{99}
 \addcontentsline{toc}{chapter}{Bibliography}
 \bibitem{BeLi93}   J. J. Benedetto, S. Li, {\em  Multiresolution analysis frames with applications}, in  ICASSPÕ93,  Minneap- 
olis,  (1993) III, 304 -- 307.
  \bibitem{BoorDevoreRon93} C.de Boor, R. Devore, A. Ron,\textit{ On the construction of
multivariate (pre)wavelets},
  Constr. Approx. {\bf 9} (1993), 123 -- 166.
 \bibitem{Christensen03} O. Christensen, \textit{An Introduction to Frames and Riesz Bases},
 Birkh\"auser Boston, 2003.
 \bibitem{Brad08} B.N.  Currey, {\em An elementary construction of wavelet frames on the Heisenberg group},  (2008), preprint. 
 \bibitem{Brad07} B.N. Currey, {\em Admissibility for a Class of 
Quasiregular Representations}, Canad. J. Math. Vol. 59 
{\bf 5}, (2007) 917 -- 942 
 \bibitem{DuffinSchaeffer52} R.J.  Duffin, A.C.  Schaeffer, \textit{A class of nonharmonic Fourier
series}, Trans. Amer. 
Math. Soc, {\bf  72} (1952), 341 --  366.
\bibitem{Feich-Pesen04} H.G. Feichtinger, I.  Pesenson, {\em
Iterative  recovery of band-limited functions on manifolds},   
 Contemp. Math., {\bf 345},    (2004),  137 -- 152.
 \bibitem{Folland95} G.B. Folland, \textsl{A Course in Abstract Harmonic Analysis},  CRC Press,
Boca Raton, Florida, 1995.
\bibitem{FuGr} H. F\"uhr, K.  Gr\"ochenig,  \textit{Sampling theorems
on locally compact groups from oscillation estimates}, Math. Z. {\bf
255} (2007), 177 -- 194.
 \bibitem{Fuehr05}H. F\"uhr, \textit{Abstract Harmonic Analysis of Continuous Wavelet Transforms},
Lecture Notes in
 Mathematics {\bf  1863}, Springer Verlag, Berlin, 2005.
 \bibitem{gm1} D.  Geller, A. Mayeli,   \textit{Continuous wavelets and frames on stratified Lie
groups I}, J. Fourier Anal.  Appl.  {\bf 12} (2006), 543 -- 579.
 \bibitem{Geller77} D. Geller, \textit{Fourier analysis on the Heisenberg group}, Proc. Natl. Acad.
Sci. U.S.A.  {\bf 74}
 (1977), 1328 -- 1331.
 \bibitem{Groechening01} K. Gr\"ochenig, \textit{Foundation of Time-Frequency Analysis},
Birkh\"auser Boston,  2001. 
 \bibitem{Lawton02} W. Lawton, \textit{Infinite convolution products and refinable distributions on
Lie groups}, Trans.  Amer.   Math.  Soc.  {\bf 352} (2000),  2913 -- 2936.
\bibitem{Le89} P.G. Lemari\'{e}, {\em Base d'ondelettes sur les groupes de Lie stratifi\'{e}s}, Bull. Soc. Math. France, {\bf 117}, (1989), 211 -- 232.
 \bibitem{LiuPeng97} H.  Liu, L.  Peng,  {\em Admissible wavelets associated
 with the Heisenberg group,} Pac.  J.  Math. {\bf 180} (1997), 101 -- 123.
  \bibitem{KadisonRingrose83} R.V. Kadison, J.R. Ringrose, \textit{Fundamentals of the Theory of
Operator Algebras}, vol. {\bf 1}, Elementary theory, Pure and applied Mathematics, {\bf 100},
Academic Press, New York-London, 1983. 
\bibitem{mayeli06} A. Mayeli, {\em Mexican hat wavelet on the Heisenberg group}, preprint, 2006.
\bibitem{Pesenson98} I. Pesenson, {\em
Sampling of Paley-Wiener functions on stratified groups},  J. Fourier Anal.  Appl.  {\bf 4}  (1998), 271 -- 281. 
  \bibitem{Schatten60} R. Schatten, \textit{Norm Ideals of Completely Continuous Operators},
Springer Verlag, Berlin, 1960. 
    \end{thebibliography}
     \end{document}